\title{Entropy-dissipation Informed Neural Network\\ for McKean-Vlasov Type PDEs}
\author{
	Zebang Shen\thanks{Authors are listed in alphabetic order.}\\
	ETH Z\"urich \\
	\texttt{zebang.shen@inf.ethz.ch} \\
	\And
	Zhenfu Wang$^*$ \\
	Peking University \\
	\texttt{zwang@bicmr.pku.edu.cn}
}
\def\eqref#1{equation~\ref{#1}}
\def\1{\bm{1}}
\def\rmB{{\mathbf{B}}}
\def\rmX{{\mathbf{X}}}
\def\vmu{{\bm{\mu}}}
\def\va{{\bm{a}}}
\def\vb{{\bm{b}}}
\def\vf{{\bm{f}}}
\def\vg{{\bm{g}}}
\def\vs{{\bm{s}}}
\def\vu{{\bm{u}}}
\def\vv{{\bm{v}}}
\def\vx{{\bm{x}}}
\def\vy{{\bm{y}}}
\def\vz{{\bm{z}}}
\def\mI{{\bm{I}}}
\def\mSigma{{\bm{\Sigma}}}
\DeclareMathAlphabet{\mathsfit}{\encodingdefault}{\sfdefault}{m}{sl}
\SetMathAlphabet{\mathsfit}{bold}{\encodingdefault}{\sfdefault}{bx}{n}
\def\gA{{\mathcal{A}}}
\def\gB{{\mathcal{B}}}
\def\gC{{\mathcal{C}}}
\def\gD{{\mathcal{D}}}
\def\gF{{\mathcal{F}}}
\def\gJ{{\mathcal{J}}}
\def\gL{{\mathcal{L}}}
\def\gX{{\mathcal{X}}}
\def\sA{{\mathbb{A}}}
\def\sR{{\mathbb{R}}}
\def\sX{{\mathbb{X}}}
\newcommand{\E}{\mathbb{E}}
\newcommand{\KL}{D_{\mathrm{KL}}}
\newtheorem{theorem}{Theorem}
\newtheorem{proposition}{Proposition}
\newtheorem{assumption}{Assumption}
\newtheorem{remark}{Remark}
\newtheorem{lemma}{Lemma}
\newcommand{\Law}{\mathrm{Law}}
\newcommand{\ud}{\mathrm{d}}
\newcommand{\X}{\mathcal{X}}
\newcommand{\udiv}{\, \mathrm{div}}
\newcommand{\Uniform}{\mathrm{Uniform}}
\let\KL\relax
\newcommand{\KL}{\mathbf{KL}}
\newcommand{\EINN}{\texttt{EINN}}
\newcommand{\defi}{\overset{\operatorname{def}}{=}}
\newcommand{\Lip}{\mathrm{Lip}}
\begin{document}

\maketitle

\begin{abstract}%
    The McKean-Vlasov equation (MVE) describes the collective behavior of particles subject to drift, diffusion, and mean-field interaction. In physical systems, the interaction term can be singular, i.e. it diverges when two particles collide. Notable examples of such interactions include the Coulomb interaction, fundamental in plasma physics, and the Biot-Savart interaction, present in the vorticity formulation of the 2D Navier-Stokes equation (NSE) in fluid dynamics.
    Solving MVEs that involve singular interaction kernels presents a significant challenge, especially when aiming to provide rigorous theoretical guarantees. In this work, we propose a novel approach based on the concept of entropy dissipation in the underlying system. We derive a potential function that effectively controls the KL divergence between a hypothesis solution and the ground truth.
    Building upon this theoretical foundation, we introduce the Entropy-dissipation Informed Neural Network (\EINN) framework for solving MVEs. In \EINN, we utilize neural networks (NN) to approximate the underlying velocity field and minimize the proposed potential function. By leveraging the expressive power of NNs, our approach offers a promising avenue for tackling the complexities associated with singular interactions.
    To assess the empirical performance of our method, we compare \EINN\ with SOTA NN-based MVE solvers. The results demonstrate the effectiveness of our approach in solving MVEs across various example problems.
\end{abstract}

\vspace{-.4cm}
\section{Introduction}
\vspace{-.2cm}
Scientists use Partial Differential Equations (PDEs) to describe natural laws and predict the dynamics of real-world systems. As PDEs are of fundamental importance, a growing area in machine learning is the use of neural networks (NN) to solve these equations {\citep{han2018solving,zhang2018deep,raissi2020hidden,cai2021physics,karniadakis2021physics,cuomo2022scientific}}.
An important category of PDEs is the McKean-Vlasov equation (MVE), which models the dynamics of a stochastic particle system with mean-field interactions
\begin{equation} \label{eqn_MVE_particle}
		\ud \rmX_t = - \nabla V(\rmX_t) \ud t  + K \ast \bar \rho_t(\rmX_t) \ud t  + \sqrt{2\nu}  \ud \rmB_t,  \quad
		\bar \rho_t = \Law(\rmX_t).  
\end{equation}
Here $\rmX_t \in \X$ denotes a random particle' position, $\X$ is either $\mathbb{R}^d$ or the torus $\Pi^d$ (a cube $[-L, L]^d$ with periodic boundary condition), $V: \sR^d\rightarrow\sR$ denotes a {\em known} potential, $K: \sR^d \rightarrow \sR^d$ denotes some interaction kernel and the convolution operation is defined as $h \ast \phi = \int_{\X} h(\vx-\vy) \phi(\vy) d \vy$, $\{\rmB_t\}_{t\geq 0}$ is the standard  $d$-dimensional Wiener process with $\nu\geq 0$ being the diffusion coefficient, and $\bar \rho_t:\X \rightarrow \sR$ is the law or the probability density function of the random variable $\rmX_t$ and the initial data $\bar \rho_0$ is given.
Under mild regularity conditions, the density function $\bar \rho_t$ satisfies the MVE
\begin{equation} \label{eqn_MVE}
    \text{(MVE)}\quad 
	\partial_t \bar \rho_t(\vx) + \udiv \left( \bar \rho_t (- \nabla V(\vx) + K \ast \bar \rho_t(\vx))\right) = \nu \Delta  \bar \rho_t(\vx),
\end{equation}
where $\udiv$ denotes the divergence operator, $\udiv\ h(\vx) = \sum_{i=1}^d {\partial h_i}/{\partial x_i}$ for a velocity field $h:\sR^d\rightarrow\sR^d$, $\Delta$ denotes the Laplacian operator defined as $\Delta \phi = \udiv( \nabla \phi)$,  where $\nabla \phi$ denotes the gradient of a scalar function $\phi:\sR^d\rightarrow\sR$. Note that all these operators are applied only on the spatial variable $\vx$.

In order to describe dynamics in real-world phenomena such as electromagnetism \citep{golse2016dynamics} and fluid mechanics {\citep{majda2002vorticity}}, the interaction kernels $K$ in the MVE can be highly \emph{singular}, i.e. $\|K(\vx)\| \rightarrow \infty$ when  $\|\vx\|\rightarrow 0$.
Two of the most notable examples are the Coulomb interactions 
\begin{equation} \label{eqn_coulomb_interaction}
    \text{(Coulomb Kernel)}\quad 
	K(\vx) = -\nabla g(\vx),\ \mathrm{with}\ g(\vx) = \begin{cases}
		\left((d-2)S_{d-1}(1)\right)^{-1}\|\vx\|^{-(d-2)}, & d\geq 3,\\
		-(2\pi)^{-1}\log \|\vx\|,  & d = 2,
	\end{cases}
\end{equation}
with $S_{d-1}(1)$ denoting the surface area of the unit sphere in $\sR^d$,
and the vorticity formulation of the 2D Navier-Stokes equation (NSE) where the interaction kernel $K$ is given by the Biot-Savart law
\begin{equation} \label{eqn_nse}
    \text{(Biot-Savart Kernel)}\quad 
	K(\vx) = \frac{1}{2\pi} \frac{\vx^\perp}{\|\vx\|^2} = \frac{1}{2\pi} \left(-\frac{x_2}{\|\vx\|^2}, \frac{x_1}{\|\vx\|^2}\right), 
\end{equation}
where $\vx= (x_1, x_2)$ and  $\|\vx\|$ denotes the Euclidean norm of a vector.\vspace{1mm}\\
Classical methods for solving MVEs, including finite difference, finite volume, finite element, spectral methods, and particle methods, have been developed over time. A common drawback of these methods lies in the constraints of their solution representation: Sparse representations, such as less granular grids, cells, meshes, fewer basis functions, or particles, may lead to an inferior solution accuracy; On the other hand, dense representations incur higher computational and memory costs. \vspace{1mm}\\
As a potent tool for function approximation, NNs are anticipated to overcome these hurdles and handle higher-dimensional, less regular, and more complex systems efficiently \citep{weinan2021algorithms}.
The most renowned NN-based algorithm is the Physics Informed Neural Network (PINN) \citep{raissi2019physics}.
The philosophy behind the PINN method is that solving a PDE system is equivalent to finding the root of the corresponding differential operators. PINN tackles the latter problem by directly parameterizing the hypothesis solution with an NN and training it to minimize the $\gL^2$ functional residual of the operators. 
As a versatile PDE solver, PINN may fail to exploit the underlying dynamics of the PDE, which possibly leads to inferior performance on task-specific solvers. For example, on the 2D NSE problem, a recent NN-based development \citet{zhang2022drvn} surpasses PINN and sets a new SOTA empirical performance, which however lacks rigorous theoretical substantiation.
Despite the widespread applications of PINN, rigorous error estimation guarantees are scarce in the literature. While we could not find results on the MVE with the Coulomb interaction, only in a very recent paper \citep{deerror}, the authors establish for NSE that the PINN loss controls the discrepancy between a candidate solution and the ground truth. 
We highlight that their result holds \emph{average-in-time}, meaning that at a particular timestamp $t \in [0, T]$, a candidate solution with small PINN loss may still significantly differ from the true solution. In contrast, all guarantees in this paper are \emph{uniform-in-time}. Moreover, there is a factor in the aforementioned guarantee that \emph{exponentially depends} on the total evolving time $T$, while the factor in our guarantee for the NSE is \emph{independent} of $T$.
We highlight that these novel improvements are achieved for the proposed \EINN\ framework since we take a completely different route from PINN: Our approach is explicitly designed to exploit the underlying dynamics of the system, as elaborated below.

\vspace{-.3cm}
\paragraph{Our approach} Define the operator
\vspace{-.1cm}
\begin{equation} \label{eqn_operator_A}
    \mathcal{A}[\rho] \defi - \nabla V + K \ast \rho -  \nu \nabla \log \rho.
\end{equation}
By noting $\Delta  \bar \rho_t = \udiv (\bar \rho_t \nabla \log \bar  \rho_t)$, we can rewrite the MVE in the form of a continuity equation
\begin{equation} \label{eqn_MVE_CE}
	\partial_t \bar  \rho_t(\vx)  + \udiv \Big( \bar \rho_t(\vx) \gA[\bar \rho_t](\vx)\Big) = 0.
\end{equation}
For simplicity, we will refer to $\mathcal{A}[\bar \rho_t]$ as the \emph{underlying velocity}.
Consider another time-varying \emph{hypothesis velocity} field $f:\sR\times \sR^d\rightarrow\sR$ and let $\rho^f_t$ be the solution to the continuity equation
\vspace{-1mm}
\begin{equation} \label{eqn_CE}
    \text{(hypothesis solution)}\quad 
	\partial_t \rho^f_t(\vx)  + \udiv (\rho_t^f(\vx) f(t, \vx) ) = 0,\ \rho^f_0 = \bar \rho_0
\end{equation}
for $t \in [0, T]$, where we recall that the initial law  $\bar \rho_0$ is known.
We will refer to $\rho_t^f$ as the \emph{hypothesis solution} and use the superscript to emphasize its dependence on the hypothesis velocity field $f$.
We propose an Entropy-dissipation Informed Neural Network framework (\EINN), which trains an NN parameterized hypothesis velocity field $f_\theta$ by minimizing the following \EINN\ loss
\begin{equation} \label{eqn_self_consistency_potential}
\text{(\EINN\ loss)}\quad 
        R(f_\theta) \defi \int_0^T \int_\X \|f_\theta(t,\vx) - \mathcal{A}[\rho_t^{f_\theta}](\vx) \|^2 {\rho_t^{f_\theta}}(\vx)\ud \vx \ud t.    
\end{equation}
The objective (\ref{eqn_self_consistency_potential}) is obtained by studying the stability of carefully constructed Lyapunov functions. These Lyapunov functions draw inspiration from the concept of entropy dissipation in the system, leading to the name of our framework.
We highlight that we provide a rigorous error estimation guarantee for our framework for MVEs with singular kernels (\ref{eqn_coulomb_interaction}) and (\ref{eqn_nse}), showing that when $R(f_\theta)$ is sufficiently small, $\rho_t^{f_\theta}$ recovers the ground truth $\bar \rho_t$ in the KL sense, uniform in time.

\begin{theorem}[Informal] \label{thm_informal}
	Suppose that the initial density function $\bar \rho_0$ is sufficiently regular and the hypothesis velocity field  $f_t(\cdot) = f(t, \cdot)$ is at least three times continuously differentiable both in $t$ and $x$. 
	We have for the MVE with a bounded interaction kernel $K$ or with the singular Coulomb (\ref{eqn_coulomb_interaction}) or Biot-Savart (\ref{eqn_nse}) interaction, the KL divergence between the hypothesis solution $\rho_t^f$ and the ground truth $\bar \rho_t$ is controlled by the \EINN\ loss for any $t\in[0, T]$, i.e. there exists some constant $C>0$, 
	\begin{equation}
		\sup_{t \in [0, T]}	\KL(\rho_t^f, \bar \rho_t) \leq C R(f).
	\end{equation} 
\end{theorem}
\vspace{-3mm}
Having stated our main result, we elaborate on the difference between \EINN\ and PINN in terms of information flow over time, which explains why \EINN\ achieves better theoretical guarantees:
In PINN, the residuals at different time stamps are independent of each other and hence there is no information flow from the residual at time $t_1$ to the one at time $t_2 (> t_1)$. In contrast, in the \EINN\ loss (\ref{eqn_self_consistency_potential}), incorrect estimation made in $t_1$ will also affect the error at $t_2$ through the hypothesis solution $\rho_t^f$. Such an information flow gives a stronger gradient signal when we are trying to minimize the \EINN\ loss, compared to the PINN loss. It partially explains why we can obtain the novel uniform-in-time estimation as opposed to the average-in-time estimation for PINN and why the constant $C$ in the NSE case is independent of $T$ for \EINN\ (Theorem \ref{NSMainEstimate}), but exponential in $T$ for PINN. 

\vspace{-1mm}

\textbf{Contributions.} In summary, we present a novel NN-based framework for solving the MVEs. Our method capitalizes on the entropy dissipation property of the underlying system, ensuring robust theoretical guarantees even when dealing with singular interaction kernels. We elaborate on the contributions of our work from theory, algorithm, and empirical perspectives as follows.
\vspace{-2mm}
\begin{enumerate}[wide, labelwidth=!, labelindent=0pt]
\vspace{-1mm}
	\item (Theory-wise) By studying the stability of the MVEs with bounded interaction kernels or with singular interaction kernels in the Coulomb (\ref{eqn_coulomb_interaction}) and the Biot-Savart case (\ref{eqn_nse}) (the 2D NSE) via entropy dissipation, we establish the error estimation guarantee for the \EINN\ loss on these equations. Specifically, we design a potential function $R(f)$ of a hypothesis velocity $f$ such that $R(f)$ controls the KL divergence between the hypothesis solution $\rho_t^f$ (defined in equation (\ref{eqn_CE})) and the ground truth solution $\bar \rho_t$ for any time stamp within a given time interval $[0, T]$. 
    A direct consequence of this result is that $R(f)$ can be used to assess the quality of a generic hypothesis solution to the above MVEs and $\rho_t^f$ exactly recovers $\bar \rho_t$ in the KL sense given that $R(f) = 0$.
    \vspace{-1mm}
	\item (Algorithm-wise) When the hypothesis velocity field is parameterized by an NN, i.e. $f = f_\theta$ with $\theta$ being some finite-dimensional parameters, the \EINN\ loss $R(f_\theta)$ can be used as the loss function of the NN parameters $\theta$. We discuss in detail how an estimator of the gradient $\nabla_\theta R(f_\theta)$ can be computed so that stochastic gradient-based optimizers can be utilized to train the NN. In particular, for the 2D NSE (the Biot-Savart case (\ref{eqn_nse})), we show that the singularity in the gradient computation can be removed   
	by exploiting the anti-derivative of the Biot-Savart kernel.
    \vspace{-1mm}
	\item (Empirical-wise) We compare the proposed approach, derived from our novel theoretical guarantees, with SOTA NN-based algorithms for solving the MVE with the Coulomb interaction and the 2D NSE (the Biot-Savart interaction). We pick specific instances of the initial density $\bar \rho_0$, under which explicit solutions are known and can be used as the ground truth to test the quality of the hypothesis ones.
	Using NNs with the same complexity (depth, width, and structure), we observe that the proposed method significantly outperforms the included baselines.
\end{enumerate}

%

\vspace{-4mm}
\section{Entropy-dissipation Informed Neural Network}
\vspace{-3mm}
In this section, we present the proposed \EINN\ framework for the MVE.
To understand the intuition behind our design, we first write the continuity equation (\ref{eqn_CE}) in a similar form as the MVE (\ref{eqn_MVE_CE}):
\vspace{-1mm}
\begin{equation} \label{eqn_CE_as_MVE}
	\partial_t \rho^f_t(\vx) + \udiv \bigg(\rho_t^f(\vx) \Big(\gA[\rho_t^f](\vx) + \delta_t(\vx) \Big) \bigg) = 0,
\end{equation}
where $f$ is the hypothesis velocity (recall that $f_t(\cdot) = f(t, \cdot)$) and 
\begin{equation} \label{eqn_perturbation}
    \text{(Perturbation)}\quad 
	\delta_t(\vx) \defi f_t(\vx) - \gA[\rho_t^f](\vx)
\end{equation}
can be regarded as a perturbation to the original MVE system.
Consequently, it is natural to study the deviation of the hypothesis solution $\rho^f_t$ from the true solution $\bar \rho_t$ using an appropriate Lyapunov function $L(\rho_t^f, \bar \rho_t)$. 
The functional relation between this deviation and the perturbation is termed as the {\emph{stability}} of the underlying dynamical system, which allows us to derive the \EINN\ loss (\ref{eqn_self_consistency_potential}). 
Following this idea, the design of the \EINN\ loss can be determined by the choice of the Lyapunov function $L$ used in the stability analysis. 
In the following, we describe the Lyapunov function used for the MVE with the Coulomb interaction and the 2D NSE (MVE with Biot-Savart interaction). The proof of the following results are the major theoretical contributions of this paper and will be elaborated in the analysis section \ref{section_analysis}.
\begin{itemize}[leftmargin=*]
	\item For the MVE with the Coulomb interaction, we choose $L$ to be the \emph{modulated free energy} $E$ (defined in equation (\ref{DefModFree})) which is originally proposed in \citep{bresch2019mean} to establish the mean-field limit of a corresponding interacting particle system. We have (setting $L = E$)
	{
	\begin{equation}
		\frac{\ud}{\ud t } E(\rho_t^f, \bar \rho_t) \leq \frac 1 2  \int_{\mathcal{X}} \rho_t^f(\vx) \|\delta_t(\vx) \|^2 \ud \vx  + C\, E(\rho_t^f, \bar \rho_t),
	\end{equation}
	}
	where $C$ is a universal constant depending on $\nu $ and $(\bar\rho_t)_{t \in [0, T]}$. 
	\item For the 2D NSE (MVE with the  Biot-Savart interaction), we choose $L$ as the KL divergence. Our analysis is inspired by \citep{jabin2018quantitative} which for the first time establishes the quantitative mean-field limit of the stochastic interacting particle systems where the interaction kernel can be  in some negative Sobolev space. We have 
	{
	\begin{equation}
		\frac{\ud }{\ud  t} \mathbf{KL}(\rho_t^f, \bar \rho_t) \leq - \frac \nu  2 \int_\X \rho^f_t(\vx) \|\nabla \log \frac{\rho^f_t}{\bar \rho_t}(\vx)\|^2 +C\, \mathbf{KL}(\rho_t^f, \bar \rho_t)+ \frac{1}{\nu } \int_\X \rho_t^f(\vx)  \|\delta_t(\vx)\|^2 \ud \vx, 
	\end{equation}
	}
where again $C$ is a universal constant depending on $\nu $ and $(\bar\rho_t)_{t \in [0, T]}$. 
\end{itemize}
After applying Gr\"onwall's inequality on the above results, we can see that the \EINN\ loss (\ref{eqn_self_consistency_potential}) is precisely the term derived by stability analysis of the MVE system with an appropriate Lyapunov function.
In the next section, we elaborate on how a stochastic approximation of $\nabla_\theta R(f_\theta)$ can be efficiently computed for a parameterized hypothesis velocity field $f = f_\theta$ so that stochastic optimization methods can be utilized to minimize $R(f_\theta)$.

\subsection{Stochastic Gradient Computation with Neural Network Parameterization} \label{section_NN_parameterization}
While the choice of the \EINN\ loss (\ref{eqn_self_consistency_potential}) is theoretically justified through the above stability study, in this section, we show that it admits an estimator which can be efficiently computed. 
Define the flow map $X_t$ via the ODE $\ud \vx(t) = f_t(\vx(t); \theta)\ud t$ with $\vx(0) = \vx_0$  such that $\vx(t) = X_t(\vx_0)$.
From the definition of the push-forward measure, one has $\rho^f_t = X_t\sharp\bar\rho_0$.
Recall the definitions of the \EINN\ loss $R(f)$ in equation (\ref{eqn_self_consistency_potential}) and the perturbation $\delta_t$ in equation (\ref{eqn_perturbation}). Use the change of variable formula of the push-forward measure in (a) and Fubini's theorem in (b). We have
\begin{equation} \label{eqn_pathwise_reformulation}
	R(f) = \int_0^T \|\delta_t\|_{\rho_t^f}^2 d t \stackrel{(a)}{=} \int_0^T \|\delta_t\circ X_t\|_{\rho_0}^2 d t \stackrel{(b)}{=} \int \int_0^T \|\delta_t\circ X_t(\vx_0)\|^2 d t d \bar \rho_0(\vx_0).
\end{equation}
Consequently, by defining the trajectory-wise loss (recall $\vx(t) = X_t(\vx_0)$)
\begin{equation} \label{eqn_trajectory_wise_loss}
	R(f; \vx_0) = \int_0^T \|\delta_t\circ X_t(\vx_0)\|^2 d t = \int_0^T \|\delta_t(\vx(t))\|^2 d t,
\end{equation}
we can write the potential function (\ref{eqn_self_consistency_potential}) as an expectation $R(f) = \E_{\vx_0\sim \bar \rho_0}[R(f; \vx_0)]$.
Similarly, when $f$ is parameterized as $f = f_\theta$, we obtain the expectation form $\nabla_\theta R(f_\theta) = \E_{\vx_0\sim\bar \rho_0} [\nabla_\theta R(f_\theta; \vx_0)]$.

We show $\nabla_\theta R(f_\theta; \vx_0)$ can be computed accurately, via the adjoint method (for completeness see the derivation of the adjoint method in appendix \ref{appendix_adjoint_method}). 
As a recap, suppose that we can write $R(f_\theta; \vx_0)$ in a standard ODE-constrained form
$R(f_\theta; \vx_0) = \ell(\theta) = \int_0^T g(t, \vs(t), \theta) d t$,
where $\{\vs(t)\}_{t\in[0, T]}$ is the solution to the ODE $\frac{d }{d t} \vs(t) = \psi(t, \vs(t); \theta)$ with $\vs(0) = \vs_0$, and $\psi$ is a known transition function.
The adjoint method states that the gradient $\frac{d}{d \theta} \ell(\theta)$ can be computed as
\begin{equation}
    \text{(Adjoint Method)}\quad 
	\frac{d \ell}{d \theta} = \int_0^T a(t)^\top\frac{\partial \psi}{\partial \theta}(t, \vs(t); \theta) + \frac{\partial g}{\partial \theta}(t, \vs(t); \theta) \ud t.
\end{equation}
where $a(t)$ solves the final value problems $\frac{d}{d t} a(t)^\top + a(t)^\top \frac{\partial  \psi}{\partial  s}(t, \vs(t); \theta) + \frac{\partial g}{\partial s}(t, \vs(t); \theta) = 0, a(T) = 0$.
In the following, we focus on how $R(f_\theta; \vx_0)$ can be written in the above ODE-constrained form.

\paragraph{Write $R(f_\theta; \vx_0)$ in  ODE-constrained form}
Expanding the definition of $\delta_t$ in equation (\ref{eqn_perturbation}) gives
\begin{equation} \label{eqn_delta_x_t}
	\delta_t(\vx(t)) = f_t(\vx(t)) - \left( - \nabla V(\vx(t)) + K\ast \rho_t^f(\vx(t)) - \nu \nabla \log \rho_t^f(\vx(t))\right).
\end{equation}
Note that in the above quantity, $f$ and $V$ are known functions. 
Moreover, it is known that $\nabla \log \rho_t^f(\vx(t))$ admits a closed form dynamics (e.g. see Proposition 2 in \citep{shen22a})
\begin{equation} \label{eqn_dynamics_of_score}
	\frac{d }{d t}\nabla \log \rho_{t}^{f}(\vx(t)) = - \nabla  \left(\udiv f_{t}(\vx(t); \theta)\right) -   \left(\gJ_{f_{t}}(\vx(t); \theta)\right)^\top \nabla \log \rho_{t}^{f}(\vx(t)),
\end{equation}
which allows it to be explicitly computed by starting from $\nabla \log \bar \rho_0(x_0)$ and integrating over time (recall that $\bar \rho_0$ is known).
Here $\gJ_{f_{t}}$ denotes the Jacobian matrix of $f_t$.
Consequently, all we need to handle is the convolution term $K\ast \rho_t^f(\vx(t))$.

A common choice to approximate the convolution operation is via Monte-Carlo integration:
Let $\vy_{i}(t) \overset{\mathrm{iid}}{\sim} \rho_t^f$ for $i = 1, \ldots, N$ and denote an empirical approximation of $\rho_t^f$ by $\mu_N^{\rho_t^f} = \frac{1}{N}\sum_{i=1}^{N} \delta_{\vy_{i}(t)}$, where $\delta_{\vy_{i}(t)}$ denotes the Dirac measure at $\vy_{i}(t)$. 
We approximate the convolution term in equation (\ref{eqn_delta_x_t}) in different ways for the Coulomb and the  Biot-Savart interactions:
\vspace{-2mm}
\begin{enumerate}[wide, labelwidth=!, labelindent=0pt]
	\item For the Coulomb type kernel (\ref{eqn_coulomb_interaction}), we first approximate $K\ast \rho_t^f$ with $K_c\ast \rho_t^f$, where 
\begin{equation} \label{eqn_Coulomb_kernel_cutoff}
    K_c(\vx) \defi \begin{cases}
        K(\vx)     \quad &\text{if}\ \|\vx\|> c,\\
        0 \quad &\text{if}\ \|\vx\|\leq c.
    \end{cases}
\end{equation}
If $\rho_t^\vf$ is bounded in $\X$, we have
\begin{align*}
   \sup_{\vx\in\X} \|(K - K_c)\ast \rho_t^f(\vx)\| = \sup_{\vx\in\X} \|\int_{\|\vx -\vy\|\leq c} \frac{\vx - \vy}{\|\vx - \vy\|^{d}} \rho_t^f(\vy)\ud \vy \|
   \leq \|\rho_t^f\|_{\gL^\infty(\X)} \int_{\|\vy\|\leq c}\frac{1}{\|\vy\|^{d-1}}\ud \vy.
\end{align*}
To compute the integral on the right-hand side, we will switch to polar coordinates $(r, \psi)$:
\begin{equation} \label{eqn_polar_coordinate}
\int_{|\vy|\leq c}\frac{1}{|\vy|^{d-1}}\ud \vy = \int_0^c \ud r \frac{1}{r^{d-1}} \int_\Psi \ud\psi\ J_{(r, \psi)} \leq \int_0^c \ud r = c.
\end{equation}
Here, $J_{(r, \psi)}$ denotes the determinant of the Jacobian matrix resulting from the transformation from the Cartesian system to the polar coordinate system.
In inequality (\ref{eqn_polar_coordinate}), we utilize the fact that $J_{(r, \psi)} \leq r^{d-1}$, which allows us to cancel out the factor ${1}/{r^{d-1}}$.
Now that $K_c$ is bounded by $c^{-d+1}$, we can further approximate $K_c\ast \rho_t^\vf$ using $K_c\ast \mu_N^{\rho_t^\vf}$ with error of the order $O(c^{-d+1}/\sqrt{N})$. Altogether, we have $\sup_{\vx\in\X} \|K\ast \rho_t^f(\vx) - K_c\ast \mu_N^{\rho_t^\vf}(\vx)\| = O(c + c^{-d+1}/\sqrt{N})$ which can be made arbitrarily small for a sufficiently small $c$ and a sufficiently large $N$.
	\item For Biot-Savart interaction (2D Navier-Stokes equation), there are more structures to exploit and we can completely avoid the singularity: As noted by \cite{jabin2018quantitative}, the convolution kernel $K$ can be written in a divergence form:
	\begin{equation} \label{eqn_K_as_divergence}
		K = \nabla \cdot U, \text{ with } U(\vx) = \frac{1}{2\pi}\begin{bmatrix}
			-\arctan(\frac{\vx_1}{\vx_2}),& 0\\
			0,& \arctan(\frac{\vx_2}{\vx_1})
		\end{bmatrix},
	\end{equation}
	where the divergence of a matrix function is applied row-wise, i.e. $[K(\vx)]_i = \udiv\ U_i(\vx)$.
	Using integration by parts and noticing that the boundary integration vanishes on the torus, one has
	\begin{align*}
		K\ast \rho_t^f (\vx) =&\ \int K(\vy) \rho_t^f(\vx - \vy) d \vy = \int \nabla\cdot U(\vy) \rho_t^f(\vx - \vy) d \vy = \int U(\vy) \nabla \rho_t^f(\vx - \vy) d \vy \\
		=&\ \int U(\vx - \vy) \rho_t^f(\vy) \nabla \log \rho_t^f(\vy) d \vy = \E_{\vy\sim \rho_t^f(\vy)}[U(\vx - \vy) \nabla \log \rho_t^f(\vy)].
	\end{align*}
	If the score function $\nabla \log \rho_t^f$ is bounded, then the integrand in the expectation is also bounded. Therefore, we can avoid integrating singular functions and the Monte Carlo-type estimation $\frac{1}{N} \sum_{i=1}^N U(\vx - \vy_i(t)) \nabla \log \rho_t^f(\vy_i(t))$ is accurate for a sufficiently large value of N.
\end{enumerate}
With the above discussion, we can write $R(f_\theta; \vx_0)$ in an ODE-constrained form in a standard way, which due to space limitation is deferred to Appendix \ref{appendix_ODE_constrained_form}.
\begin{remark}
    Let $\ell_N(\theta)$ be the function we obtained using the above approximation of the convolution, where $N$ is the number of Monte-Carlo samples.
    The above discussion shows that $\ell_N(\theta)$ and $R(f_\theta; \vx_0)$ are close in the $\gL^\infty$ sense, which is hence sufficient when the \EINN\ loss is used as error quantification since only function value matters.
    When both $\ell_N(\theta)$ and $R(f_\theta; \vx_0)$ are in $C^2$, one can translate the closeness in function value to the closeness of their gradients.
    In our experiments, using $\nabla \ell_N(\theta)$ as an approximation of $\nabla_\theta R(f_\theta; \vx_0)$ gives very good empirical performance already. 
\end{remark}
\section{Analysis} \label{section_analysis}
In this section, we focus on the torus case, i.e. $\X = \Pi^d$ is a box with the periodic boundary condition.
This is a typical setting considered in the literature as the universal function approximation of NNs only holds over a compact set. Moreover, the boundary integral resulting from integration by parts vanishes in this setting, making it amenable for analysis purposes.
For completeness, we provide a discussion on the unbounded case, i.e. $\X = \sR^d$ in the Appendix \ref{appendix_unbounded}, which requires additional regularity assumptions.
Given the MVE (\ref{eqn_MVE}), if $K$ is bounded, it is sufficient to choose the Lyapunov functional $L(\rho_t^f, \bar \rho_t)$ as the KL divergence (please see Theorem \ref{theorem_bounded_K} in the appendix). But for the singular Coulomb kernel, we need also to consider the modulated energy as in \citep{serfaty2020mean}
\begin{equation}
    \text{(Modulated Energy)}\quad 
	\label{DefModEnergy}
	F(\rho, \bar \rho) \defi \frac 1 2 \int_{\mathcal{X}^2} g(x- y) \ud (\rho - \bar \rho)(x) \ud (\rho - \bar \rho )(y), 
\end{equation}
where $g$ is the fundamental solution to the Laplacian equation in $\mathbb{R}^d$, i.e. $- \Delta g = \delta_0$, and the Coulomb interaction reads $K= -\nabla g$ (see its closed form expression in equation (\ref{eqn_coulomb_interaction})). If we are only interested in the deterministic  dynamics with Coulomb interactions, i.e. $\nu =0$ in equation (\ref{eqn_MVE}), it suffices to choose $L(\rho_t^f, \bar \rho) $ as  $F(\rho_t^f, \bar \rho_t)$ (please see Theorem \ref{ThmCoul}). But if we consider the system with  Coulomb interactions and  diffusions, i.e. $\nu >0$, we shall combine the KL divergence and the modulated energy to form the modulated free energy as in \cite{bresch2019modulated}, which reads 
\begin{equation}
    \text{(Modulated Free Energy)}\quad 
	\label{DefModFree}
	E(\rho, \bar \rho) \defi \nu \KL (\rho, \bar \rho) +  F(\rho, \bar \rho). 
\end{equation}
This definition agrees with the physical meaning that ``Free Energy = Temperature $\times $ Entropy + Energy", and we note that the temperature is proportional to the diffusion coefficient $\nu$. We remark also for two probability densities $\rho$ and $\bar \rho$, $F(\rho, \bar \rho) \geq 0$ since by looking in the Fourier domain $F(\rho, \bar \rho) = \int \hat g(\xi) |\widehat{\rho - \bar \rho}(\xi)|^2  \ud \xi \geq 0$ as $\hat g(\xi) \geq 0$. Moreover, $F(\rho, \bar \rho)$ can be regarded as a negative Sobolev norm for $\rho- \bar \rho$, which metricizes weak convergence.\\
To obtain our main stability estimate, we first obtain the time evolution of the KL divergence.  
\begin{lemma}[Time Evolution of the KL divergence] \label{TimeEvolKLMV} 
	Given the hypothesis velocity field $f=f(t, x) \in C^1_{t, x}$. Assume that $(\rho_t^f)_{t \in [0, T]}$ and $(\bar \rho_t)_{t \in [0, T]}$ are classical solutions to equation (\ref{eqn_CE}) and equation (\ref{eqn_MVE_CE}) respectively.  It holds that (recall the definition of $\delta_t$ in equation (\ref{eqn_perturbation})) 
	\[
	\frac{\ud }{\ud t} \int_{\mathcal{X}} \rho^f_t \log \frac{\rho^f_t }{\bar \rho_t} = - \nu \int_{ \X}\rho^f_t |\nabla \log \frac{\rho^f_t}{\bar \rho_t}|^2 +  \int_{\X} \rho^f_t K * (\rho^f_t - \bar \rho_t ) \cdot \nabla  \log  \frac{\rho^f_t }{\bar \rho_t} +  \int_{\X} \rho^f_t \delta_t \cdot \nabla \log \frac{\rho^f_t }{\bar \rho_t }, 
	\]
	where $\X$ is the tours $\Pi^d$. All the integrands are evaluated at $\vx$. 
\end{lemma}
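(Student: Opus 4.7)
The plan is to differentiate the relative entropy directly and substitute the two continuity equations. Writing $H(t) = \int_{\X} \rho^f_t \log(\rho^f_t/\bar\rho_t)\,\ud x$, the product rule gives
\[
\frac{\ud H}{\ud t} = \int_{\X} \partial_t \rho^f_t \,\log\frac{\rho^f_t}{\bar\rho_t}\,\ud x + \int_{\X} \partial_t \rho^f_t\,\ud x - \int_{\X} \frac{\rho^f_t}{\bar\rho_t}\,\partial_t \bar\rho_t\,\ud x,
\]
where the middle term vanishes by mass conservation ($\int \partial_t \rho^f_t = -\int \udiv(\rho^f_t f_t) = 0$ after integration by parts, using either decay at infinity on $\mathbb{R}^d$ or periodicity on $\Pi^d$).

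Next I would plug in the two PDEs in continuity form. For $\rho^f_t$ the equation \eqref{eqn_CE} yields
\[
\int_{\X} \partial_t \rho^f_t \,\log\frac{\rho^f_t}{\bar\rho_t}\,\ud x = -\int_{\X} \udiv(\rho^f_t f_t)\,\log\frac{\rho^f_t}{\bar\rho_t}\,\ud x = \int_{\X} \rho^f_t\, f_t \cdot \nabla \log\frac{\rho^f_t}{\bar\rho_t}\,\ud x,
\]
while for $\bar\rho_t$, using \eqref{eqn_MVE_CE} with underlying velocity $\bar v_t := -\nabla V + K*\bar\rho_t - \nu \nabla \log \bar\rho_t$, and the identity $\nabla(\rho^f_t/\bar\rho_t) = (\rho^f_t/\bar\rho_t)\nabla \log(\rho^f_t/\bar\rho_t)$, I would get
\[
-\int_{\X} \frac{\rho^f_t}{\bar\rho_t}\,\partial_t \bar\rho_t\,\ud x = \int_{\X} \frac{\rho^f_t}{\bar\rho_t}\,\udiv(\bar\rho_t \bar v_t)\,\ud x = -\int_{\X} \rho^f_t\, \bar v_t \cdot \nabla \log\frac{\rho^f_t}{\bar\rho_t}\,\ud x.
\]
Adding these two identities,
\[
\frac{\ud H}{\ud t} = \int_{\X} \rho^f_t\,(f_t - \bar v_t) \cdot \nabla \log\frac{\rho^f_t}{\bar\rho_t}\,\ud x.
\]

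The final step is simply to expand $f_t - \bar v_t$ using the definition \eqref{eqn_perturbation} of $\delta_t$, which gives $f_t - \bar v_t = K*(\rho^f_t - \bar\rho_t) - \nu \nabla \log(\rho^f_t/\bar\rho_t) + \delta_t$. Distributing the three pieces against $\nabla \log(\rho^f_t/\bar\rho_t)$ and isolating the diffusion piece as $-\nu \int \rho^f_t |\nabla \log(\rho^f_t/\bar\rho_t)|^2$ yields exactly the claimed identity.

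The main obstacles are bookkeeping rather than conceptual. The first is ensuring that the two integrations by parts produce no boundary contributions, which is why we restrict to $\mathcal{X} = \mathbb{R}^d$ with sufficient decay or $\mathcal{X} = \Pi^d$ with periodic boundary conditions, and why the $C^1_{t,x}$ regularity of $f$ together with the classical-solution assumption is invoked. The second is justifying that $\log(\rho^f_t/\bar\rho_t)$ and $\nabla\log(\rho^f_t/\bar\rho_t)$ are well-defined and integrable against the appropriate measures; under the stated smoothness of solutions and assuming $\rho^f_t,\bar\rho_t$ are bounded above and below on compact sets, this is routine. Once these technicalities are handled, the identity is a direct consequence of the product rule and two integration by parts.
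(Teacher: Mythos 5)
Your proof is correct and follows essentially the same route as the paper: differentiate the relative entropy, discard $\int \partial_t \rho^f_t = 0$ by mass conservation, integrate by parts using the two continuity equations, and arrive at $\int \rho^f_t (f_t - \bar v_t)\cdot\nabla\log(\rho^f_t/\bar\rho_t)$ before expanding via $\delta_t$. The only difference is organizational — you substitute $\partial_t\rho^f_t = -\udiv(\rho^f_t f_t)$ directly and expand the velocity difference at the end, whereas the paper inserts $f_t = \mathcal{A}[\rho^f_t] + \delta_t$ first and then groups the resulting terms into a linear part $I_1$ (involving $\nabla V$, which vanishes), an interaction part $I_2$, and a diffusion part $I_3$ — but the mechanism and the integrations by parts are identical.
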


We refer the proof of this lemma and all other lemmas and theorems in this section to the appendix \ref{detailed proof}. We remark that to have the existence of classical solution $(\bar \rho_t)_{t \in [0, T]}$, we definitely need the regularity assumptions on $-\nabla V$ and on $K$. But the linear term $- \nabla V $ will not contribute to the evolution of the relative entropy. See \citep{jabin2018quantitative} for detailed discussions. \\
Similarly, we have the time evolution of the modulated energy as follows.
\begin{lemma}[Time evolution of the modulated energy] \label{ModuEnergyEvo} Under the same assumptions as in Lemma \ref{TimeEvolKLMV}, given the diffusion coefficient $\nu \geq 0$, it holds that (recall the definition of $\delta_t$ in equation (\ref{eqn_perturbation})) 
	\[
	\begin{split}
		\frac{\ud }{\ud t } F(\rho_t^f, \bar \rho_t)&  =  - \int_{\mathcal{X}} \rho_t^f \|K *(\rho_t^f - \bar \rho_t)\|^2 - \int_{\mathcal{X}} \rho_t^f \, \delta_t \cdot K * (\rho_t^f - \bar \rho_t ) + \nu \int_{\mathcal{X}} \rho^f_t \, K * (\rho_t^f - \bar \rho_t )\cdot \nabla \log \frac{\rho_t^f}{\bar \rho_t} \\
		&  - \frac{1}{2} \int_{\mathcal{X}^2} K(x-y) \cdot \Big( \mathcal{A}[\bar \rho_t](x) - \mathcal{A}[\bar \rho_t](y) \Big) \ud (\rho_t^f - \bar \rho_t )^{\otimes 2 }(x, y) \\
	\end{split}
	\]
	where we recall that the operator $\gA$ is defined in equation (\ref{eqn_operator_A}).
\end{lemma}

By  Lemma \ref{TimeEvolKLMV} and careful analysis, in particular by rewriting the Biot-Savart law in the divergence of a bounded matrix-valued function (\ref{eqn_K_as_divergence}), we obtain the following estimate for the 2D NSE.

\begin{theorem}[Stability estimate of the 2D NSE] \label{NSMainEstimate}	
    Notice that when $K$ is the Biot-Savart kernel, $\udiv K =0$. Assume that the initial data $\bar \rho_0 \in C^3(\Pi^d)$ and there exists $c>1$ such that $\frac 1 c \leq \bar \rho_0 \leq c$.  Assume further the hypothesis velocity field $f(t, x) \in C^1_{t, x}$. Then it holds that 
	\[
	\sup_{t \in [0, T]} \int_{\Pi^d} \rho_t^f \log \frac{\rho_t^f}{\bar \rho_t} \ud x \leq \frac{e^C}{\nu}  R(f), 
	\]
	where $C = \int_0^\infty     M(t) \ud t < \infty$ with 
	$M(t) \defi \|\nabla \log \bar \rho_t\|_{L^\infty}^2/2\nu + 2\Big\| {\nabla^2 \bar \rho_t}/{\bar \rho_t} \Big\|_{L^\infty}$. 
	
\end{theorem} 
We remark that given $\bar \rho_0$ is smooth enough and fully supported on $\X$, one can propagate the regularity to finally show the finiteness of $C$.
See detailed computations as in \cite{guillin2021uniform}. 
We give the complete proof in the appendix \ref{detailed proof}. This theorem tells us that as long as $R(f)$ is small, the KL divergence between $\rho_t^f$ and $\bar \rho_t$ is small and the control is uniform in time $t \in [0, T]$ for any $T$. Moreover, we highlight that $C$ is independent of $T$, and our result on the NSE is significantly better than the average-in-time and exponential-in-$T$ results from \citep{deerror}.\\
To treat the MVE (\ref{eqn_MVE}) with Coulomb interactions, we exploit the time evolution of the modulated free energy $E(\rho_t^f, \bar \rho_t)$. Indeed, combining Lemma \ref{TimeEvolKLMV} and Lemma \ref{ModuEnergyEvo}, we arrive at the following identity.  
\begin{lemma}[Time evolution of the modulated free energy]\label{TimeEvoMFE} 
	Under the same assumptions as in Lemma \ref{TimeEvolKLMV}, one has (recall the definitions of $\delta_t$ and $\gA$ in (\ref{eqn_perturbation}) and (\ref{eqn_operator_A}) respectively) 
	\[
	\begin{split}
		\frac{\ud }{\ud t } E(\rho_t^f, \bar \rho_t)  & = -\int_{\mathcal{X}} \rho_t^f  \Big|K * (\rho_t^f - \bar \rho_t) - \nu \nabla \log \frac{\rho_t^f }{\bar \rho_t}\Big|^2 - \int_{\mathcal{X}} \rho_t^f \, \delta_t \cdot \Big( K * (\rho_t^f - \bar \rho_t ) - \nu \nabla \log \frac{\rho_t^f}{\bar \rho_t }\Big) \\
		&  - \frac{1}{2} \int_{\mathcal{X}^2} K(x-y) \cdot \Big( \mathcal{A}[\bar \rho_t](x) - \mathcal{A}[\bar \rho_t](y) \Big) \ud (\rho_t^f - \bar \rho_t )^{\otimes 2 }(x, y).
	\end{split}
	\]
\end{lemma}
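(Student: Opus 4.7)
The plan is to simply exploit the definition $E(\rho_t^f, \bar\rho_t) = \nu \KL(\rho_t^f, \bar\rho_t) + F(\rho_t^f, \bar\rho_t)$ from \eqref{DefModFree}, differentiate, and substitute the two previously established evolution identities. Concretely, I would write
\[
\frac{\ud}{\ud t} E(\rho_t^f, \bar\rho_t) = \nu \, \frac{\ud}{\ud t}\KL(\rho_t^f, \bar\rho_t) + \frac{\ud}{\ud t} F(\rho_t^f, \bar\rho_t),
\]
invoke Lemma \ref{TimeEvolKLMV} for the first piece (scaled by $\nu$) and Lemma \ref{ModuEnergyEvo} for the second, then collect terms. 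The regularity hypotheses of Lemma \ref{TimeEvolKLMV} suffice to legitimize both substitutions, so no extra analytic work is needed here.

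The central observation, and essentially the only non-mechanical step, is that the three quadratic terms not involving $\delta_t$ assemble into a single negative square. Multiplying the KL identity by $\nu$ produces the dissipation term $-\nu^2 \int_{\X} \rho_t^f |\nabla \log (\rho_t^f/\bar\rho_t)|^2$ and the cross term $+\nu \int_{\X} \rho_t^f \, K*(\rho_t^f-\bar\rho_t)\cdot \nabla \log(\rho_t^f/\bar\rho_t)$. The modulated-energy identity contributes another copy of the same cross term $+\nu \int_{\X} \rho_t^f \, K*(\rho_t^f-\bar\rho_t)\cdot \nabla \log(\rho_t^f/\bar\rho_t)$ together with $-\int_{\X} \rho_t^f |K*(\rho_t^f-\bar\rho_t)|^2$. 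Adding them gives precisely
\[
-\int_{\X} \rho_t^f \Bigl( |K*(\rho_t^f-\bar\rho_t)|^2 - 2\nu \, K*(\rho_t^f-\bar\rho_t)\cdot \nabla \log \tfrac{\rho_t^f}{\bar\rho_t} + \nu^2 \bigl|\nabla \log \tfrac{\rho_t^f}{\bar\rho_t}\bigr|^2 \Bigr),
\]
which factors as $-\int_{\X} \rho_t^f \bigl| K*(\rho_t^f-\bar\rho_t) - \nu \nabla \log(\rho_t^f/\bar\rho_t)\bigr|^2$. This is exactly the first term in the claimed identity, and the coefficient matching between the $\KL$-derivative (through the factor $\nu$) and the $F$-derivative is what makes the completion of the square possible; in fact, this cancellation is the reason the combination $\nu \KL + F$ is the natural Lyapunov candidate.

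The $\delta_t$-linear terms are handled by the same bookkeeping: Lemma \ref{TimeEvolKLMV} (times $\nu$) gives $+\nu \int_{\X} \rho_t^f \, \delta_t \cdot \nabla \log(\rho_t^f/\bar\rho_t)$, while Lemma \ref{ModuEnergyEvo} gives $-\int_{\X} \rho_t^f \, \delta_t \cdot K*(\rho_t^f-\bar\rho_t)$; factoring $\rho_t^f \delta_t$ yields the second claimed term $-\int_{\X}\rho_t^f\, \delta_t \cdot \bigl(K*(\rho_t^f-\bar\rho_t) - \nu \nabla \log(\rho_t^f/\bar\rho_t)\bigr)$. The final commutator-type term $-\tfrac{1}{2} \int_{\X^2} K(x-y) \cdot (\mathcal{A}[\bar\rho_t](x) - \mathcal{A}[\bar\rho_t](y)) \, \ud(\rho_t^f-\bar\rho_t)^{\otimes 2}(x,y)$ appears only in Lemma \ref{ModuEnergyEvo} and passes through untouched.

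Since the proof is purely algebraic after the two input lemmas, there is no real analytic obstacle; the only thing one must be careful about is sign and coefficient tracking, and in particular verifying that the cross term $\int \rho_t^f K*(\rho_t^f-\bar\rho_t)\cdot \nabla \log(\rho_t^f/\bar\rho_t)$ enters with matched signs in both inputs so that the square completes. I would finish by remarking that the resulting identity is the entropy-dissipation-style inequality at the heart of the subsequent Coulomb-case stability bound, with the first term serving as the dissipation, the second as a controlled perturbation driven by $\delta_t$, and the third as the commutator that will be handled in the proof of the Coulomb stability theorem.
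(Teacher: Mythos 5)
Your proposal is correct and coincides with the paper's own argument: the paper simply states that Lemma~\ref{TimeEvoMFE} follows by combining Lemma~\ref{TimeEvolKLMV} (scaled by $\nu$) and Lemma~\ref{ModuEnergyEvo}, which is exactly your decomposition, and your careful completion of the square on the quadratic terms and the grouping of the $\delta_t$-linear terms is the bookkeeping the paper leaves implicit.
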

\vspace{-1mm}
Inspired by the mean-field convergence results as in \cite{serfaty2020mean} and \cite{bresch2019modulated}, we finally can control the growth of $E(\rho_t^f, \bar \rho_t)$ in the case when $\nu >0$,  and $F(\rho_t^f, \bar \rho_t)$ in the case when $\nu =0$. Note also that $E(\rho_t^f, \bar \rho_t )$ can also control the KL divergence when $\nu >0$. 
\begin{theorem} [Stability estimate of MVE with Coulomb interactions] \label{ThmCoul}
	Assume that for $t \in [0, T]$, the underlying velocity field $\mathcal{A}[\bar \rho_t](x)$ is Lipschitz  in $x$ and
	$\sup_{t \in [0, T]} \|\nabla \mathcal{A}[\bar \rho_t](\cdot)\|_{L^\infty} = C_1  < \infty.$ 
	Then there exists $C>0$ such that 
	\[
	\sup_{t \in [0, T]} \nu\,  \KL (\rho_t^f, \bar \rho_t) \leq \sup_{t \in [0, T]} E(\rho_t^f, \bar \rho_t) \leq \exp(C C_1 T) R(f).  
	\]
	In the deterministic case when $\nu =0$, under the same assumptions, it holds that 
	\[
	\sup_{t \in [0, T]} F( \rho_t^f, \bar \rho_t) \leq \exp(CC_1T ) R(f). 
	\]
\end{theorem}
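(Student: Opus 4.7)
The plan is to combine the time evolution formula for the modulated free energy in Lemma \ref{TimeEvoMFE} with a Gr\"onwall argument. First I would handle the cross term $-\int_{\X} \rho_t^f \delta_t \cdot \big( K * (\rho_t^f - \bar \rho_t) - \nu \nabla \log \frac{\rho_t^f}{\bar \rho_t}\big)$ by Young's inequality $|2ab| \leq a^2 + b^2$, splitting it as $\tfrac{1}{2}\int \rho_t^f |\delta_t|^2$ plus $\tfrac{1}{2}\int \rho_t^f |K*(\rho_t^f-\bar\rho_t) - \nu \nabla \log \frac{\rho_t^f}{\bar\rho_t}|^2$. The latter is dominated by the leading negative dissipation term in Lemma \ref{TimeEvoMFE}, which after absorption leaves a surplus $-\tfrac{1}{2}\int \rho_t^f |K*(\rho_t^f-\bar\rho_t) - \nu\nabla\log\frac{\rho_t^f}{\bar\rho_t}|^2$ that we simply discard by nonnegativity.

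Second, I would bound the commutator-type term
\[
T_{\mathrm{com}}(t) := -\tfrac{1}{2}\int_{\X^2} K(x-y)\cdot \big(\mathcal{A}[\bar \rho_t](x) - \mathcal{A}[\bar \rho_t](y)\big)\, \ud (\rho_t^f - \bar \rho_t)^{\otimes 2}(x,y)
\]
using the Serfaty/Bresch--Jabin--Wang modulated-energy estimate: because $K=-\nabla g$ for the Coulomb potential and $\mathcal{A}[\bar\rho_t]$ is Lipschitz with $\|\nabla \mathcal{A}[\bar\rho_t]\|_{L^\infty}\leq C_1$, one has $|T_{\mathrm{com}}(t)| \leq C\, C_1\, F(\rho_t^f, \bar\rho_t)$ for a universal constant $C$ depending only on dimension. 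Since $F \leq E$ when $\nu>0$ (and $F=E$ when $\nu=0$), this gives $|T_{\mathrm{com}}(t)|\leq CC_1 E(\rho_t^f,\bar\rho_t)$.

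Combining these two steps with Lemma \ref{TimeEvoMFE} yields the differential inequality
\[
\frac{\ud}{\ud t} E(\rho_t^f, \bar \rho_t) \;\leq\; \tfrac{1}{2} \|\delta_t\|_{\rho_t^f}^2 \;+\; C\, C_1\, E(\rho_t^f, \bar \rho_t).
\]
Since $\rho_0^f = \bar \rho_0$, we have $E(\rho_0^f, \bar \rho_0)=0$, so Gr\"onwall's inequality gives
\[
E(\rho_t^f, \bar \rho_t) \;\leq\; \tfrac{1}{2}\int_0^t e^{CC_1(t-s)} \|\delta_s\|_{\rho_s^f}^2 \ud s \;\leq\; \tfrac{1}{2} e^{CC_1 T} R(f),
\]
uniformly in $t\in[0,T]$. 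Since $\nu\, \KL(\rho_t^f,\bar\rho_t)\leq E(\rho_t^f,\bar\rho_t)$ by definition, the first chain of inequalities follows (after absorbing the $1/2$ into $C$). For the deterministic case $\nu=0$, the modulated free energy reduces to $F$, and Lemma \ref{ModuEnergyEvo} already has the same structure; the identical Young's inequality plus commutator estimate yields $\frac{\ud}{\ud t}F \leq \tfrac{1}{2}\|\delta_t\|_{\rho_t^f}^2 + CC_1 F$, and Gr\"onwall gives the stated bound on $F$.

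The main obstacle is the commutator bound in Step 2: for smooth kernels this is elementary, but the Coulomb kernel is singular at the origin, so the naive Lipschitz estimate $|K(x-y)\cdot(\mathcal{A}(x)-\mathcal{A}(y))|\leq C_1 |K(x-y)||x-y|$ does not directly produce a quantity controlled by $F(\rho_t^f,\bar\rho_t)$. The resolution, following \citet{serfaty2020mean} and \citet{bresch2019modulated}, is to re-express the bilinear form via the stress--energy tensor associated with $g$ (or, on the torus, via a suitable truncation of $g$ at scale comparable to the mean inter-particle distance) and then recognize the resulting quadratic form as controlled by the modulated energy itself. I would invoke this estimate as a black box from those references, which is exactly the reason the Coulomb singularity is admissible in our theorem.
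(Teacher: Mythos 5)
Your proposal is correct and follows essentially the same route as the paper's proof: Young's/Cauchy--Schwarz on the cross term in Lemma~\ref{TimeEvoMFE} to absorb into the negative dissipation term, the Bresch--Jabin--Wang / Serfaty commutator estimate (Lemma~5.2 in \citealp{bresch2019modulated}) invoked as a black box to bound the bilinear term by $C\,C_1\,F\leq C\,C_1\,E$, and Gr\"onwall from $E(\rho_0^f,\bar\rho_0)=0$. You have simply spelled out the Cauchy--Schwarz/absorption step more explicitly than the paper does, and you correctly identify the commutator bound as the nontrivial ingredient that handles the Coulomb singularity.
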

\vspace{-1mm}
Recall the definition of the operator $\gA$ in \eqref{eqn_operator_A}. Given that $\mathcal{X}= \Pi^d$, and $\bar \rho_0$ is smooth enough and bounded from below, one can propagate regularity to obtain the Lipschitz condition for $\mathcal{A}[\bar \rho_t]$. See the proof and the discussion on the Lipschitz assumptions on $\mathcal{A}[\bar \rho_t](\cdot)$ in the appendix \ref{detailed proof}. 
\paragraph{Approximation Error of Neural Network}
Theorems \ref{NSMainEstimate} and \ref{ThmCoul} provide the error estimation guarantee for the proposed \EINN\ loss (\ref{eqn_self_consistency_potential}).
Suppose that we parameterize the velocity field $f=f_\theta$ with an NN parameterized by $\theta$, as we did in Section \ref{section_NN_parameterization} and let $\tilde f$ be the output of an optimization procedure when $R(f_\theta)$ is used as objective.
In order the explicitly quantify the mismatch between $\rho^{\tilde f}_t$ and $\bar \rho_t$, we need to quantify two errors: (i) Approximation error, reflecting how well the ground truth solution can be approximated among the NN function class of choice; (ii) Optimization error, involving minimization of a highly nonlinear non-convex objective. 
In the following, we show that for a function class $\gF$ with sufficient capacity, there exists at least one element $\hat f\in\gF$ that can reduce the loss function $R(\hat f)$ as much as desired.
We will not discuss how to identify such an element in the function class $\gF$ as it is independent of our research and remains possibly the largest open problem in modern AI research.
To establish our result, we make the following assumptions.
\begin{assumption} \label{ass_appendix_initial}
	$\rho_0$ is sufficiently regular, such that $\nabla \log \rho_0 \in \gL^\infty(\X)$ and $\bar f_t  = \mathcal{A}[\bar \rho_t] \in W^{2,\infty}(\X)$. $\nabla V$ is Lipschitz continuous. Here $W^{2,\infty}(\X)$ stands for the Sobolev norm of order $(2, \infty)$ over $\X$.
\end{assumption}
\vspace{-1mm}
We here again need to propagate the regularity for $f_t$ at least for a time interval $[0, T]$. It is easy to do so for the torus case, but for the unbounded domain, there are some technical issues to be  overcome. Similar assumptions are also needed in some mathematical works for instance in \cite{jabin2018quantitative}. 
We also make the following assumption on the capacity of the function class $\gF$, which is satisfied for example by NNs with tanh activation function \citep{DERYCK2021732}.
\begin{assumption} \label{ass_appendix_approximation}
	The function class is sufficiently large, such that there exists $\hat f \in \gF$ satisfying $\hat f_t \in \gC^3(\X)$ and $\|\hat f_t - \bar f_t\|_{W^{2, \infty}(\X)} \leq \epsilon$ for all $t\in[0, T]$.
\end{assumption}
\begin{theorem} \label{thm_approximation_error_NN}
	Consider the case where the domain is the torus. 
	Suppose that Assumptions \ref{ass_appendix_initial} and \ref{ass_appendix_approximation} hold. 
    For both the Coulomb and the Biot-Savart cases, there exists $\hat f\in\gF$ such that $R(\hat f) \leq C(T)\cdot(\epsilon \cdot\ln 1/\epsilon)^2$, where $C(T)$ is some constant independent of $\epsilon$. Here $R$ is the \EINN\ loss (\ref{eqn_self_consistency_potential}).
\end{theorem}
The major difficulty to overcome is the lack of Lipschitz continuity due to the singular interaction. We successfully address this challenge by establishing that the contribution of the singular region $(\|\vx\|\leq\epsilon)$ to $R(\hat f)$ can be bounded by $O((\epsilon \log \frac{1}{\epsilon})^2)$.
Please see the detailed proof in Appendix \ref{appendix_approximation_error_NN}.





\vspace{-1mm}
\section{Related Works on NN-based PDE solvers} \label{section_related_work}
Solving PDEs is a key aspect of scientific research, with a wealth of literature {\citep{evans2022partial}}.
Due to space limitations, a detailed discussion about the classical PDE solvers is deferred to Appendix \ref{appendix_related_work}.
In this section, we focus on the NN-based approaches as they are more related to our research.

As previously mentioned, PINN is possibly the most well-known method of this type.
PINN regards the solution to a PDE system as the root of the corresponding operators $\{\gD_i(\vg)\}_{i=1}^n$, and expresses the time and space boundary conditions as $\gB(\vg) = 0$, where $\vg$ is a candidate solution and $\gD_i$ and $\gB$ are operators acting on $\vg$.
Parameterizing $\vg= \vg_\theta$ using an NN, PINN optimizes its parameters $\theta$ by minimizing the residual $L(\theta) \defi \sum_{i=1}^n\lambda_i\|\gD_i(\vg_\theta)\|_{\gL^2(\X)}^2 + \lambda_0\|\gB(\vg_\theta)\|_{\gL^2(\X)}^2$. The hyperparameters $\lambda_i$ balance the validity of PDEs and boundary conditions under consideration and must be adjusted for optimal performance.
In contrast, \EINN\ requires no hyperparameter tuning.
PINN is versatile and can be applied to a wide range of PDEs, but its performance may not be as good as other NN-based solvers tailored for a particular class of PDEs, as it does not take into account other in-depth properties of the system, a phenomenon observed in the literature \citep{krishnapriyan2021characterizing,wang2}.
\citep{CiCP-28-2042} initiates the work of theoretically establishing the consistency of PINN by considering the {linear} elliptic and parabolic PDEs, for which they prove that a vanishing PINN loss $L(\theta)$ asymptotically implies $\vg_\theta$ recovers the true solution. A similar result is extended to the linear advection equations in \citep{shin2020error}.
Leveraging the stability of the operators $\gD_i$ (corresponding to PDEs of interest), non-asymptotic error estimations are established for linear Kolmogorov equations in \citep{de2022error}, for semi-linear and quasi-linear parabolic equations and the incompressible Euler in \citep{10.1093/imanum/drab093}, and for the NSE in \citep{deerror}. 
We highlight these non-asymptotic results are all average-in-time, meaning that even when the PINN loss is small the deviation of the candidate solution to the true solution may be significant at a particular timestamp $t \in [0, T]$. In comparison, our results are uniform-in-time, i.e. the supremum of the deviation is strictly bounded by the \EINN\ loss. Moreover, we show in Theorem \ref{NSMainEstimate}, for the NSE our error estimation holds for any $T$ uniformly, while the results in \citep{deerror} have an exponential dependence on $T$.

Recent work from \citet{zhang2022drvn} proposes the Random Deep Vortex Network (RDVN) method for solving the 2D NSE and achieves SOTA performance for this task. 
Let $\vu_t^\theta$ be an estimation of the interaction term $K\ast \rho_t$ in the SDE (\ref{eqn_MVE_particle}) and use $\rho_t^\theta$ to denote the law of the particle driven by the SDE $\ud \rmX_t = \vu_t^\theta(\rmX_t) \ud t  + \sqrt{2\nu}  \ud \rmB_t$. To train $\vu_t^\theta$, RDVN minimizes the loss $L(\theta) = \int_0^T\int_\X \|\vu_t^\theta(\vx) - K\ast \rho_t^\theta(\vx)\|^2_{\gL^2}\ud \vx \ud t$. Note that in order to simulate the SDE, one needs to discretize the time variable in loss function $L$. After training $\theta$, RDVN outputs $\rho_t^\theta$ as a solution. However, no error estimation guarantee is provided that controls the discrepancy between $\rho_t^\theta$ and $\rho_t$ using $L(\theta)$. 

\citet{shen22a} propose the concept of self-consistency for the FPE. However, unlike our work where the \EINN\ loss is derived via the stability analysis, they construct the potential $R(f)$ for the hypothesis velocity field $f$ by observing that the underlying velocity field $f^*$ is the fixed point of some velocity-consistent transformation $\gA$ and they construct $R(f)$ to be a more complicated Sobolev norm of the residual $f - \gA(f)$. In their result, they bound the Wasserstein distance between $\rho^f$ and $\rho$ by $R(f)$, which is weaker than our KL type control. The improved KL type control for the Fokker-Planck equation has also been discussed in \citep{boffi2023probability}. A very recent work \citep{li2023self} extends the self-consistency approach to compute the general Wasserstein gradient flow numerically, without providing further theoretical justification.

\vspace{-2mm}
\section{Experiments}
To show the efficacy and efficiency of the proposed approach, we conduct numerical studies on example problems that admit explicit solutions and compare the results with SOTA NN-based PDE solvers. 
The included baselines are PINN \citep{raissi2019physics} and DRVN \citep{zhang2022drvn}.
Note that these baselines only considered the 2D NSE.
We extend them to solve the MVE with the Coulomb interaction for comparison, and the details are discussed in Appendix \ref{appendix_implementation_of_baselines}.
\vspace{-.4cm}
\paragraph{Equations with an explicit solution} 
We consider the following two instances that admit explicit solutions. 
We verify these solutions in Appendix \ref{appendix_explicit_solution}.\\
\textit{Lamb-Oseen Vortex (2D NSE)} \citep{oseen1912uber}: Consider the whole domain case where $\gX = \sR^2$ and the Biot-Savart kernel (\ref{eqn_nse}). Let $\mathcal{N} ( \vmu , \mSigma)$ be the Gaussian distribution with mean $\vmu$ and covariance $\mSigma$.
If $\rho_0 = \mathcal{N}(0, \sqrt{2\nu t_0}\mI_2)$ for some $t_0 \geq 0$, then we have $\rho_t(\vx) = \mathcal{N}(0, \sqrt{2\nu (t+t_0)}\mI_2)$.\\
\textit{Barenblatt solutions (MVE)} \citep{serfaty2014mean}: Consider the 3D MVE with the Coulomb interaction kernel (\ref{eqn_coulomb_interaction}) with the diffusion coefficient set to zero, i.e. $d=3$ and $\nu=0$. Let $\Uniform[\sA]$ be the uniform distribution over a set $\sA$. Consider the whole domain case where $\gX = \sR^3$.
If $\rho_0 = \Uniform[\|\vx\|\leq (\frac{3}{4\pi}t_0)^{1/3}]$ for some $t_0 \geq 0$, then we have $\rho_t = \Uniform[\|\vx\|\leq (\frac{3}{4\pi}(t+t_0))^{1/3}]$.

\vspace{-.4cm}
\paragraph{Numerical results}
\begin{figure}
	\centering
	\begin{tabular}{c c c}
		\includegraphics[width=.27\columnwidth]{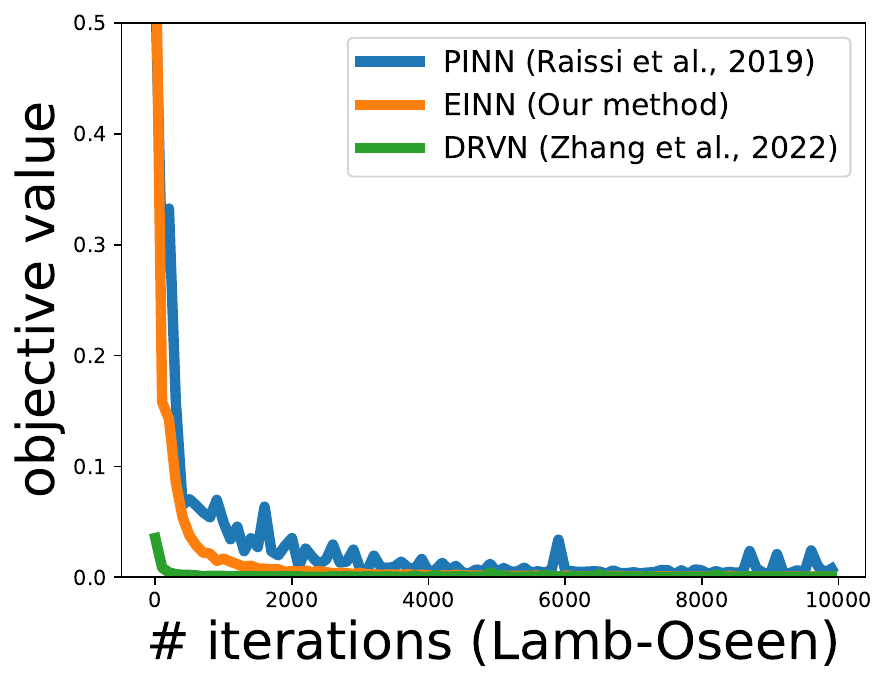} & \includegraphics[width=.27\columnwidth]{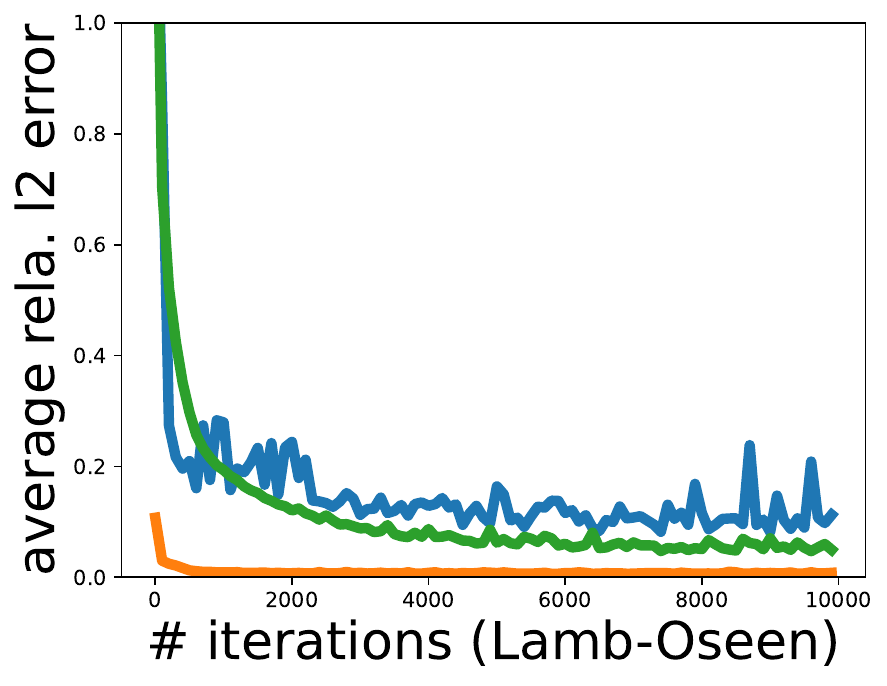} & \includegraphics[width=.27\columnwidth]{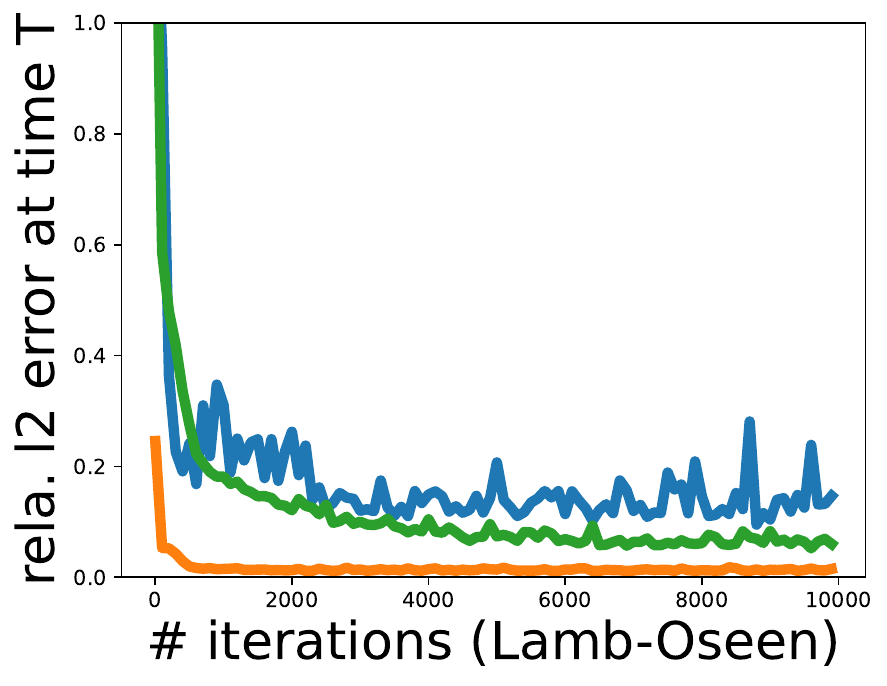} \\
		\includegraphics[width=.27\columnwidth]{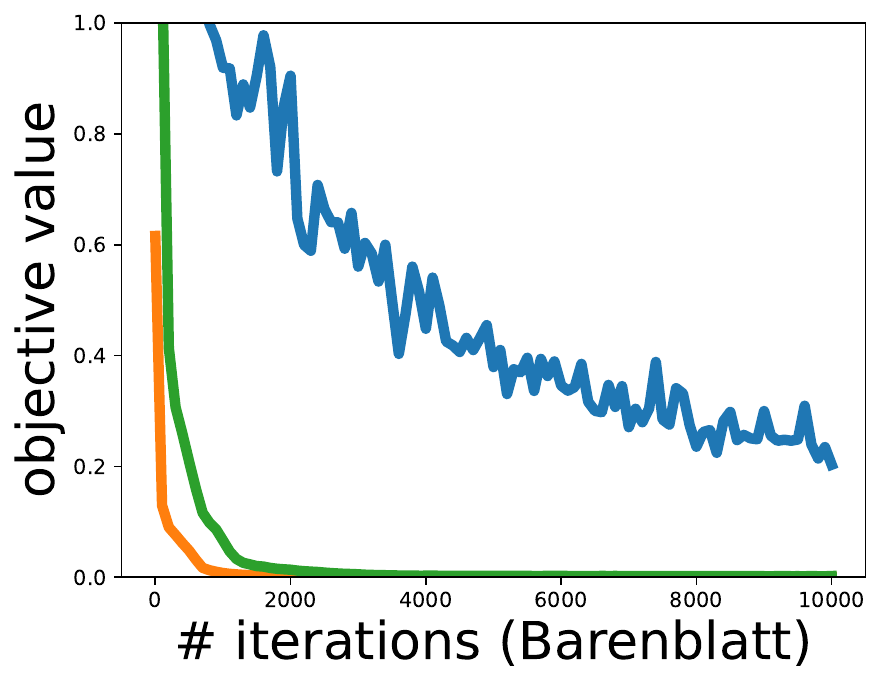} & \includegraphics[width=.27\columnwidth]{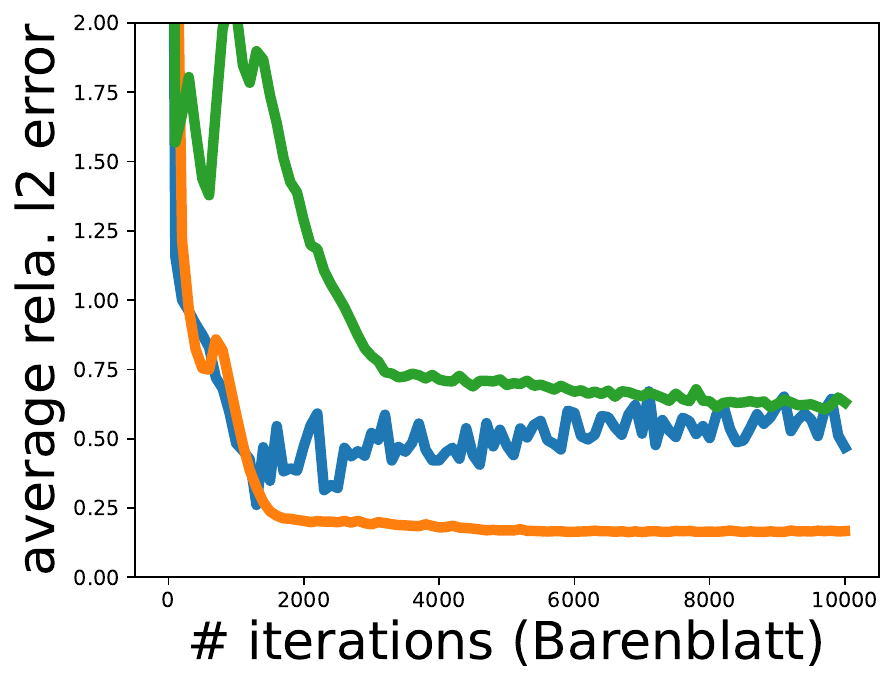} & \includegraphics[width=.27\columnwidth]{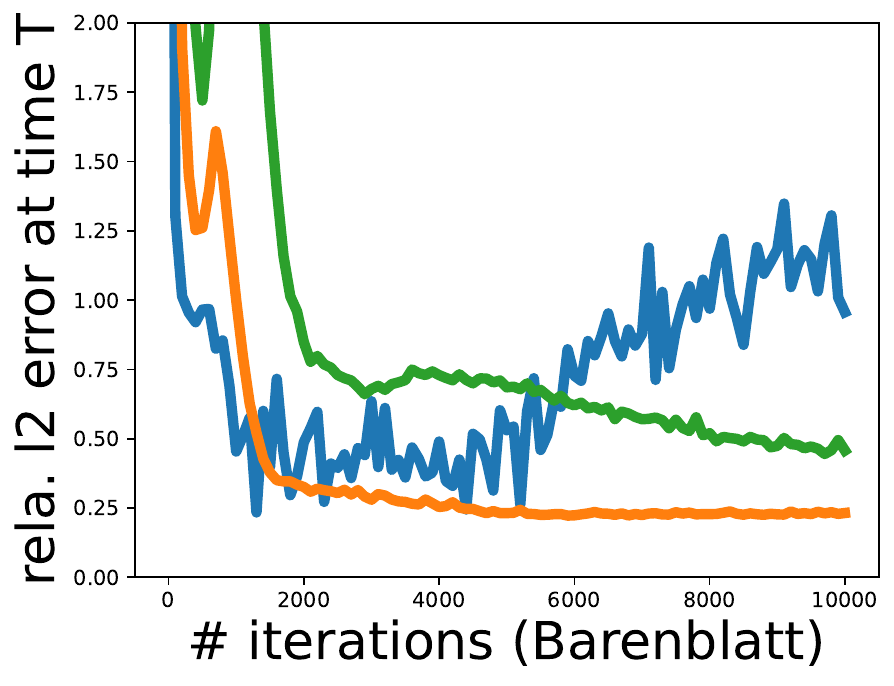}
	\end{tabular}
	\caption{The first row contains results for the 2D NSE and the second row contains the results for the 3D MVE with Coulomb interaction. The first column reports the objective losses, while the second and third columns report the average and last-time-stamp relative $\ell_2$ error. }
	\label{figure_experiment}
\end{figure}
We present the results of our experiments in Figure \ref{figure_experiment}, where the first row contains the result for the Lamb-Oseen vortex (2D NSE) and the second row contains the result for the Barenblatt model (3D MVE).
The explicit solutions of these models allow us to assess the quality of the outputs of the included methods. Specifically, given a hypothesis solution $\rho_t^f$, the ground truth $\bar \rho_t$ and the interaction kernel $K$, define the relative $\ell_2$ error at timestamp $t$ as 
$Q(t) \defi \int_\Omega {\|K\ast(\rho^f_t - \bar \rho_t) (\vx)\|}/{\|K\ast \bar \rho_t(\vx)\|} d \vx$,
where $\Omega$ is some domain where $\rho_t$ has non-zero density. We are particularly interested in the quality of the convolution term $K\ast\rho^f_t$ since it has physical meanings. In the Biot-Savart kernel case, it is the velocity of the fluid, while in the Coulomb case, it is the Coulomb field. We set $\Omega$ to be $[-2, 2]^2$ for the Lamb-Oseen vortex and to $[-0.1, 0.1]^3$ for the Barenblatt model. For both models, we take $\nu = 0.1$, $t_0 = 0.1$, and $T = 1$. The neural network that we use is an MLP with $7$ hidden layers, each of which has 20 neurons.\\
From the first column of Figure \ref{figure_experiment}, we see that the objective loss of all methods has substantially reduced over a training period of 10000 iterations. This excludes the possibility that a baseline has worse performance because the NN is not well-trained, and hence the quality of the solution now solely depends on the efficacy of the method.
From the second and third columns, we see that the proposed \EINN\ method significantly outperforms the other two methods in terms of the time-average relative $\ell_2$ error, i.e. $\frac{1}{T}\int_0^T Q(t) d t$ and the relative $\ell_2$ error at the last time stamp $Q(T)$. This shows the advantage of our method.
\vspace{-.3cm}

\paragraph{Conclusion}
By employing entropy dissipation of the MVE, we design a potential function for a hypothesis velocity field such that it controls the KL divergence between the corresponding hypothesis solution and the ground truth, for any time stamp within the period of interest.
Built on this potential, we proposed the \EINN\ method for MVEs with NN and derived the detailed computation method of the stochastic gradient, using the classic adjoint method. Through empirical studies on examples of the 2D NSE and the 3D MVE with Coulomb interactions, we show the significant advantage of the proposed method, when compared with two SOTA NN based MVE solvers. 

\begin{ack}
Zhenfu Wang is supported by the National Key R\&D Program of China, Project Number 2021YFA1002800, NSFC grant No.12171009, Young Elite Scientist Sponsorship Program by China Association for Science and Technology (CAST) No. YESS20200028 and the Fundamental Research Funds for the Central Universities (the start-up fund), Peking University.
Zebang Shen's work is supported by ETH research grant and Swiss National Science Foundation (SNSF) Project Funding No. 200021-207343.
\end{ack}
\bibliographystyle{abbrvnat}  
\bibliography{MVE}

\clearpage
\appendix
\section{Classical Methods for Solving MVEs} \label{appendix_related_work}
Solving partial differential equations (PDEs) is a key aspect of scientific research, with a wealth of literature in the field {\citep{evans2022partial}}. 
For the interest of this paper, we will only consider the methods that can be used to solve the MVE under consideration.

\paragraph{Categorize PDE solvers via solution representation.} To better understand the benefits of neural network (NN) based PDE solvers and to compare our approach with others, we categorize the literature based on the representation of the solution to the PDE. These representations can be roughly grouped into four categories:
\begin{itemize}[leftmargin=*]
    \item \textbf{1. Discretization-based representation:} The solution to the PDE is represented as discrete function values at grid points, finite-size cells, or finite-element meshes.
    \item \textbf{2. Representation as a combination of basis functions:} The solution to the PDE is approximated as a sum of basis functions, e.g. Fourier series, Legendre polynomials, or Chebyshev polynomials.
    \item \textbf{3. Representation using a collection of particles:} The solution to the PDE is represented as a collection of particles, each described by its weight, position, and other relevant information. 
    \item \textbf{4. NN-based representation:} NNs offer many strategies for representing the solution to the PDE, such as using the NN directly to represent the solution, using normalizing flow or GAN-based parameterization to ensure the non-negativity and conservation of mass of the solution or using the NN to parameterize the underlying dynamics of the PDE, such as the time-varying velocity field that drives the evolution of the system.
\end{itemize}

The drawback of the first three strategies is that a sparse representation\footnote{For example, sparser grid, cell or mesh with less granularity, fewer basis functions, fewer particles.} leads to reduced solution accuracy, while a dense representation results in increased computational and memory cost. 
NNs, as powerful function approximation tools, are expected to surpass these strategies by being able to handle higher-dimensional, less regular, and more complicated systems \citep{weinan2021algorithms}.

Given a representation strategy of the solution, an effective solver must exploit the underlying properties of the system to find the best candidate solution. Three-= notable properties that are utilized to design solvers are 
\begin{enumerate}[leftmargin=*,label=(\Alph*)]
    \item the PDE definition or weak formulation of the system,
    \item the SDE interpretation of the system,
    \item the variational interpretation, particularly the Wasserstein gradient flow interpretation.
\end{enumerate}

These properties are combined with the solution representations mentioned earlier to form different methods.
For example, the Finite Difference method \citep{smith1985numerical}, Finite Volume method \citep{moukalled2016finite}, and Finite Element method \citep{johnson2012numerical} represent the solution of partial differential equations (PDEs) by discretizing the solution and utilize the property (A), at least in their original form. On the other hand, recent work by \citet{carrillo2022primal} solves PDEs admitting a Wasserstein gradient flow structure using the classic JKO scheme \citep{jordan1998variational}, which is based on the property (C), and the solution is also represented via discretization. The Spectral method \citep{shen2011spectral} is a class of methods that exploits property (A) by representing the solution as a combination of basis functions.
The Random Vortex Method \citep{long1988convergence} is a highly successful method for solving the vorticity formulation of the 2D Navier-Stokes equation by exploiting property (C) and representing the solution with particles. The Blob method from \citet{carrillo2019blob} is another particle-based method for solving PDEs that describe diffusion processes, which also exploits property (C).

\section{Comparison with Neural Operator}
We thank the anonymous reviewers for pointing out the interesting research direction of neural operators \citep{xiao2023coupled,gupta2021multiwavelet,li2020neural,kovachki2021neural,li2020fourier}.
However, to highlight the major difference between EINN and the approach of the Neural Operator, it's worth noting that they consider completely different problem settings: EINN requires \emph{no pre-existing data} and the goal is to obtain the solution to a PDE by solely exploiting the structure of the equation itself. In contrast, the neural operator approach is \emph{data-driven}, i.e. it relies on the existence of configuration-solution pairs. Here, by configuration-solution pairs, we mean the correspondence between some configurations that determine the PDE, e.g. the initial condition or the viscosity parameter in the fluid dynamics problems, and the pre-existing solution to the PDE given the aforementioned configurations. Consequently, the neural operator approach is more like a regression problem where a neural network is trained to learn the abstract map between the configuration and the solution. In contrast, EINN is more like a numerical PDE solver.

Consequently, EINN and the approach of neural operator are two related but quite distinct research directions. They are related in the sense that EINN can provide the data (configuration-solution pairs) required by the neural operator approach. They are distinct since EINN requires no data a priori, while the neural operator approach is built on the supervised learning paradigm.
\section{More Details on the Experiments}
\subsection{Implementations of Baselines} \label{appendix_implementation_of_baselines}
\paragraph{Objectives of PINN}
\begin{itemize}[leftmargin=*]
	\item For the vorticity equation of the 2D Navier-Stokes equation, let $\vu: [0, T]\times \sR^2 \rightarrow \sR^2$ be the velocity field (this should not be confused with the velocity field of the continuity equation) such that $\nabla \cdot \vu = 0$, i.e. $\vu$ is divergence-free, and let $\omega = \nabla \times \vu \in \sR$ be the vorticity.
	We have
	\begin{align}
		\frac{\partial \omega}{\partial t} + \nabla \cdot \left(\omega\vu\right) =&\ \nu \Delta \omega,\\
		\omega =&\ \nabla \times \vu.
	\end{align}
	We use this form to construct the objective for the PINN method
	\begin{equation}
		\int_0^T \|\frac{\partial \omega}{\partial t} + \nabla \cdot \left(\omega\vu\right) - \nu \Delta \omega\|_{\gL(\Omega)^2}^2 + \|\omega - \nabla \times \vu\|_{\gL(\Omega)^2} d t,
	\end{equation}
	where $\gL^2(\Omega)$ denotes the functional $\gL^2$ norm on the domain $\Omega = [-2, 2]^2$.
	\item For the MVE with Coulomb interaction, let $g$ be the Coulomb potential defined in \eqref{eqn_coulomb_interaction}. We have that $\psi = g \ast \rho$ is the solution to the Poisson equation $\Delta \psi = - \rho$ and $K * \rho = - \nabla \psi$.
	We have
	\begin{align}
		\frac{\partial \rho}{\partial t} + \nabla \cdot \left(\rho\cdot (-\nabla \psi) \right) =&\ \nu \Delta \rho \\
		\Delta \psi =&\ -\rho.
	\end{align}
	Expand the the divergence to obtain
	\begin{align}
		\frac{\partial \rho}{\partial t} + \nabla \rho \cdot (-\nabla \psi) +  \rho\cdot (-\Delta \psi) =&\ \nu \Delta \rho \\
		\Delta \psi =&\ -\rho.
	\end{align}
	Now plug in the $\Delta \psi = -\rho$ to arrive at the following equivalent form
	\begin{align}
		\frac{\partial \rho}{\partial t} + \nabla \rho \cdot -\nabla \psi +  \rho^2 =&\ \nu \Delta \rho \\
		\Delta \psi =&\ -\rho.
	\end{align}
	We use this form to construct the objective for the PINN method.
	\begin{equation}
		\int_0^T \|\frac{\partial \rho}{\partial t} + \nabla \rho \cdot -\nabla \psi +  \rho^2 - \nu \Delta \rho\|_{\gL^2(\Omega)}^2 + \|\Delta \psi + \rho\|_{\gL^2}^2,
	\end{equation}
	where $\gL^2(\Omega)$ denotes the functional $\gL^2$ norm on the domain $\Omega = [-1, 1]^2$.
\end{itemize}

\paragraph{DRVN} In the original paper \citep{zhang2022drvn}, only the Biot-Savart kernel is concerned. We extend the DRVN method to the Coulomb case by setting $K$ to be the kernel defined in \eqref{eqn_coulomb_interaction}.

\subsection{Examples with an Explicit Solution} \label{appendix_explicit_solution}
In this section, we verify the explicit solutions discussed in the experiment section.
\paragraph{Lamb-Oseen Vortex on the whole domain $\sR^2$.}
Recall that we consider the 2D Navier-Stokes equation (the MVE with the Biot-Savart interaction kernel (\ref{eqn_nse})).
The Lamb-Oseen Vortex model states that, if $\rho_0 = \mathcal{N}(0, \sqrt{2\nu t_0}\mI_2)$ for some $t_0 \geq 0$, then we have $\rho_t(\vx) = \mathcal{N}(0, \sqrt{2\nu (t+t_0)}\mI_2)$.

To verify this, define $\vu_t(\vx) = \frac{1}{\sqrt{\nu (t+t_0)}}\vv(\frac{\vx}{\sqrt{\nu (t+t_0)}})$, where 
\begin{equation}
	\vv(\vx) = \frac{1}{2\pi}\frac{\vx^{\perp}}{\|\vx\|^2}\left(1 - \exp(-\frac{1}{4}\|\vx\|^2)\right).
\end{equation}
One can easily check that $\nabla \cdot \vu_t \equiv 0$ and hence there exists a function $\psi_t$ such that $\nabla^\perp \psi_t = -\vu_t$, where $\nabla^\perp$ denotes the perpendicular gradient, defined as $\nabla^\perp = (-\partial_{\vx_2}, \partial_{\vx_2})$, and $\psi_t$ is called the stream function in the literature of fluid dynamics.
Moreover, one can verify that $\nabla \times \vu_t = \rho_t$ where $\nabla \times$ denotes the curl of a 2D velocity field, defined as $\nabla \times \vu(\vx) = \partial \vu_2 / \partial \vx_1 - \partial \vu_1/\partial \vx_2$.
Together we have
\begin{equation}
	\Delta \psi_t = -\rho_t,
\end{equation}
i.e., the stream function $\psi_t$ is the solution to the 2D Poisson equation with a source term $\rho_t$.

Under the boundary condition that $\psi_t(\vx) \rightarrow 0$ for $\|\vx\|\rightarrow \infty$, we can express $\psi_t$ via the unique Green function $G(\vx) = \frac{1}{2\pi}\ln \|\vx\|$ as 
\begin{equation}
	\psi_t(\vx) = G \ast \rho_t = \frac{1}{2\pi}\int \ln\|\vx - \vy\| \rho_t(\vy) d \vy.
\end{equation}
Consequently, by taking the perpendicular gradient, we obtain
\begin{equation}
	\vu_t = \nabla^\perp \psi_t = \frac{1}{2\pi}\int \frac{(\vx - \vy)^\perp}{\|\vx - \vy\|^2} \rho_t(\vy) d \vy = K \ast \rho_t.
\end{equation}
Finally, by plugging the expressions of $\rho_t$ and $\vu_t = K \ast \rho_t$ in the MVE (\ref{eqn_MVE}), we verified the Lamb-Oseen vortex.
%

\paragraph{Barenblatt solutions for the MVE with Coulomb Interaction.} 
Recall that we consider the MVE with the Coulomb interaction kernel (\ref{eqn_coulomb_interaction}) for $d=3$ and set the diffusion coefficient $\nu = 0$, i.e.
\begin{equation}
	\frac{\partial \rho_t}{\partial t} + \nabla \cdot \left( \rho_t \cdot -\nabla \psi_t \right) = 0
\end{equation}
where $\psi_t$ is the solution to the Poisson equation $\Delta \psi_t = -\rho_t$. The Barenblatt solution of the above MVE is stated as follows: If $\rho_0 = \Uniform[\|\vx\|\leq (\frac{3}{4\pi}t_0)^{1/3}]$ for some $t_0 \geq 0$, then we have 
\begin{equation}
	\rho_t = \Uniform[\|\vx\|\leq (\frac{3}{4\pi}(t+t_0))^{1/3}]
\end{equation}
We now verify this solution.

Recall that the volume of a three dimensional Euclidean ball with radius $R$ is $\frac{4\pi}{3}R^3$. Hence we can write the density function as $\rho_t(x) = \frac{1}{t+t_0} \chi_{\|\vx\|\leq (\frac{3}{4\pi}(t+t_0))^{1/3}}$, where $\chi_\sX$ is a function that takes value $1$ for $\vx \in \sX$ and takes value $0$ for $\vx \notin \sX$.
Take 
\begin{equation}
	\psi_t(\vx) = \begin{cases}
		\frac{2(\frac{3}{4\pi}(t+t_0))^{2/3}-\|\vx\|^2}{6(t+t_0)}, & \|\vx\| \leq (\frac{3}{4\pi}(t+t_0))^{1/3},\\
		\frac{1}{8\pi\|\vx\|}, & \|\vx\| > (\frac{3}{4\pi}(t+t_0))^{1/3}.
	\end{cases}
\end{equation}
It can be verified that the Poisson equation $\Delta \psi_t = -\rho_t$ holds (note that $\Delta \|\vx\|^{-1} = 0$, i.e. $\|\vx\|^{-1}$ is a harmonic function for $d=3$).
Consequently, for a fixed time stamp $t$ and any $\|\vx\| \leq (\frac{3}{4\pi}(t+t_0))^{1/3}$ we have 
\begin{align}
	\frac{\partial \rho_t}{\partial t}(\vx) + \nabla \cdot \left( \rho_t(\vx) \cdot -\nabla \psi_t(\vx) \right) = - \frac{1}{(t+t_0)^2} + \frac{1}{(t+t_0)^2} = 0,
\end{align}
which verifies this solution.

\clearpage
\section{Adjoint Method} \label{appendix_adjoint_method}
Consider the ODE system
\begin{align*}
	\dot s(t) =&\ \psi(s(t), t, \theta) \\
	s(0) =&\ s_0,
\end{align*}
and the objective loss
\begin{equation}
	\ell(\theta) = \int_0^T g(s(t), t, \theta) \ud t.
\end{equation}
The following proposition computes the gradient of $\ell$ w.r.t. $\theta$.
We omit the parameters of the functions for succinctness. We note that all the functions in the integrands should be evaluated at the corresponding time stamp $t$, e.g. $b^\top \frac{\partial h}{\partial \theta}\ud t$ abbreviates for $b(t)^\top \frac{\partial}{\partial \theta}h(\xi(t), x(t), t, \theta)\ud t$.
\begin{proposition}
	\begin{equation}
		\frac{\ud \ell}{\ud \theta} = \int_{0}^T a^\top \frac{\partial\psi}{\partial\theta} + \frac{\partial g}{\partial\theta}\ud t.
	\end{equation}
	where $a(t)$ is solution to the following final value problems
	\begin{equation}
		\dot a^\top + a^\top \frac{\partial\psi}{\partial s} + \frac{\partial g}{\partial s} = 0, a(T) = 0, 
	\end{equation}
\end{proposition}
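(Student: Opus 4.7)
The plan is to use the classical Lagrange multiplier / adjoint sensitivity trick. I would first introduce the augmented functional
\begin{equation*}
\mathcal{L}(\theta) \;=\; \int_0^T g(s(t),t,\theta)\,\ud t \;-\; \int_0^T a(t)^\top\bigl(\dot s(t) - \psi(s(t),t,\theta)\bigr)\,\ud t,
\end{equation*}
where $a(\cdot)$ is, for the moment, an arbitrary smooth curve taking values in the state space. Because the forward ODE $\dot s = \psi$ is satisfied along the actual trajectory, the parenthesised factor vanishes pointwise and hence $\mathcal{L}(\theta) = \ell(\theta)$ for every $\theta$ and every choice of $a$. Consequently $\ud\ell/\ud\theta = \ud\mathcal{L}/\ud\theta$, and $a$ can later be picked freely to simplify the right-hand side.

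Next I would differentiate under the integral. Writing $\eta(t) := \partial s(t)/\partial\theta$ for the forward sensitivity and applying the chain rule to $g$ and $\psi$, the derivative decomposes into a term with $\partial g/\partial\theta$, a term with $(\partial g/\partial s)\eta$, a term with $a^\top \dot\eta$ coming from differentiating $\dot s$, and a term with $-a^\top\bigl((\partial\psi/\partial s)\eta + \partial\psi/\partial\theta\bigr)$. The decisive step is to integrate $a^\top\dot\eta$ by parts,
\begin{equation*}
\int_0^T a^\top \dot\eta\,\ud t \;=\; a(T)^\top\eta(T) - a(0)^\top\eta(0) - \int_0^T \dot a^\top \eta\,\ud t,
\end{equation*}
and to invoke $\eta(0)=0$, which follows from the fact that the initial condition $s_0$ is $\theta$-independent. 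Collecting terms yields
\begin{equation*}
\frac{\ud\ell}{\ud\theta} \;=\; \int_0^T \Bigl(\frac{\partial g}{\partial s} + \dot a^\top + a^\top \frac{\partial\psi}{\partial s}\Bigr)\eta\,\ud t \;+\; \int_0^T \Bigl(a^\top \frac{\partial\psi}{\partial\theta} + \frac{\partial g}{\partial\theta}\Bigr)\ud t \;-\; a(T)^\top \eta(T).
\end{equation*}
At this point I would choose $a(\cdot)$ to solve the adjoint final-value problem $\dot a^\top + a^\top(\partial\psi/\partial s) + \partial g/\partial s = 0$ on $[0,T]$ with terminal condition $a(T)=0$. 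This choice annihilates both the $\eta$-dependent integrand and the boundary term at $T$, leaving exactly the claimed expression.

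\textbf{Main obstacle.} The only genuinely delicate point is the justification of differentiation under the integral sign and of the existence of the forward sensitivity $\eta(t)$. This requires $\psi$ and $g$ to be jointly $C^1$ in $(s,t,\theta)$ and the forward flow to depend smoothly on $\theta$; both follow from standard ODE sensitivity results under a local Lipschitz hypothesis on $\psi$ in $s$. One also needs well-posedness of the linear, time-varying adjoint ODE whose coefficient $\partial\psi/\partial s$ is evaluated along the forward trajectory, which is automatic once that coefficient is continuous along $s(\cdot)$. These regularity matters are implicit in the ODE-constrained setup used elsewhere in the paper and amount to routine bookkeeping; no structural difficulty beyond the Lagrangian computation itself arises.
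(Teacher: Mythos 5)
Your proposal is correct and follows essentially the same route as the paper: both introduce the Lagrangian $\ell - \int_0^T a^\top(\dot s - \psi)\,\ud t$, integrate the $a^\top \dot s$ term by parts, exploit that $s(0)$ is $\theta$-independent, and then choose $a$ to solve the adjoint final-value problem so that all $\partial s/\partial\theta$ contributions cancel. The only cosmetic difference is that you differentiate first and integrate $a^\top\dot\eta$ by parts, whereas the paper performs the integration by parts on $a^\top\dot s$ before differentiating in $\theta$; the resulting cancellations are identical.
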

\begin{proof}
	Let us define the Lagrange multiplier function (or the adjoint state) $a(t)$ dual to $s(t)$.
	Moreover, let $L$ be an augmented loss function of the form
	\begin{equation}
		L = \ell - \int_0^T a^\top(\dot s - \psi) \ud t.
	\end{equation}
	Since we have $\dot s(t) = \psi(s(t), t, \theta)$ by construction, the integral term in $L$ is always null and $a$ can be freely assigned while maintaining $\ud L/\ud \theta = \ud \ell/\ud \theta$.
	Using integral by part, we have
	\begin{equation}
		\int_0^T a^\top\dot s\ \ud t = a(t)^\top s(t)\vert_0^T - \int_0^T s^\top \dot a\ \ud t.
	\end{equation}
	We obtain
	\begin{align}
		L = - a(t)^\top s(t)\vert_0^T + \int_0^T \dot a^\top s + a^\top \psi + g\ \ud t.
	\end{align}
	
	Now we compute the gradient of $L$ w.r.t. $\theta$ as
	\begin{equation*}
		\frac{\ud \ell}{\ud \theta} =  \frac{\ud L}{\ud \theta} = - a(T)^\top\frac{\ud x(T)}{\ud \theta}  + \int_0^T \dot a^\top \frac{\ud s}{\ud \theta} + a^\top \left(\frac{\partial\psi}{\partial\theta} + \frac{\partial\psi}{\partial s} \frac{\ud s}{\ud \theta} \right) \ud t
		+ \int_0^T \frac{\partial g}{\partial s} \frac{\ud s}{\ud \theta} +  \frac{\partial g}{\partial\theta}\ud t,
	\end{equation*}
	which by rearranging terms yields to
	\begin{align*}
		\frac{\ud \ell}{\ud \theta} = \frac{\ud L}{\ud \theta} = - a(T)^\top\frac{\ud x(T)}{\ud \theta} + \int_{0}^T a^\top \frac{\partial \psi}{\partial \theta} +  \frac{\partial g}{\partial \theta}\ud t
		+ \int_0^T \left(\dot a^\top + a^\top \frac{\partial \psi}{\partial s} +  \frac{\partial g}{\partial s}\right)\frac{\ud s}{\ud \theta} \ud t.
	\end{align*}
	Now by taking $a$ satisfying the \emph{final} value problems
	\begin{equation}
		\dot a^\top + a^\top \frac{\partial \psi}{\partial s} + \frac{\partial g}{\partial s} = 0, a(T) = 0, 
	\end{equation}
	we derive the result
	\begin{equation}
		\frac{\ud \ell}{\ud \theta} = \int_{0}^T a^\top \frac{\partial \psi}{\partial \theta} + \frac{\partial g}{\partial \theta}\ud t.
	\end{equation}
\subsection{Writing the Trajectory-wise Loss (\ref{eqn_trajectory_wise_loss}) in an ODE-constrained form} \label{appendix_ODE_constrained_form}
We are now ready to write $R(f_\theta; \vx_0)$ in an ODE-constrained form. Define the state $\vs(t)$, the initial condition $\vs_0$ and the transition function $\psi$ as follows: Let
\begin{equation}
	\vs(t) = \left[\vx(t), \xi(t), \{\vy_i(t)\}_{i=1}^N, \{\zeta_i(t)\}_{i=1}^N\right],
\end{equation}
with $\xi(t) = \nabla\log\rho_t^f(\vx(t))$ and $\zeta_i(t) = \nabla\log\rho_t^f(\vy_i(t))$.  Take  the initial condition 
\begin{equation}
	\vs_0 = \left[\vx_0, \xi_0, \{\vy_i(0)\}_{i=1}^N, \{\zeta_i(0)\}_{i=1}^N\right]
\end{equation}
with $\xi_0 = \nabla\log \bar \rho_0(\vx_0)$, $\zeta_i(0) = \nabla\log\bar\rho_0(\vy_i(0))$, and $\vy_i(0) \stackrel{iid}{\sim} \bar \rho_0$;
and define the function
\begin{equation}
	\psi(t, s(t); \theta) = [f_t(\vx(t); \theta),\ h_t(\vx(t), \xi(t); \theta),\ \{f_t(\vy_i(t); \theta)\}_{i=1}^N,\ \{h_t(\vy_i(t), \zeta_i(t); \theta)\}_{i=1}^N],
\end{equation}
where $h(\va, \vb; \theta) = - \nabla  \left(\udiv f_{t}(\va; \theta)\right) -   \gJ^\top_{f_{t}}(\va; \theta) \vb$ (derived from \eqref{eqn_dynamics_of_score}).
Finally, define
\begin{equation}
	g(t, \vs(t); \theta) = \|f(t, \vx(t); \theta) - \left(-\nabla V(\vx(t)) + E(t, \vs(t)) - \nu \xi(t)\right)\|^2,
\end{equation}
where the estimator $E(t, \vs)$  of the convolution term is defined as
\begin{equation}
	E(t, \vs(t)) = \begin{cases}
		\frac{1}{N} \sum_{i=1}^N K_c(\vx(t) -  \vy_i(t)) & \text{the Coulomb case}, \\
		\frac{1}{N} \sum_{i=1}^N U(\vx - \vy_i(t)) \zeta_i(t) & \text{the Biot-Savart case}.
	\end{cases}
\end{equation}
We recall the definition of $U$ in \eqref{eqn_K_as_divergence} and the definition of $K_c$ in \eqref{eqn_Coulomb_kernel_cutoff}.
\end{proof}
\clearpage
\section{Detailed Proofs}\label{detailed proof}

\begin{proof}[Proof of Lemma  \ref{TimeEvolKLMV}] 
	Recall the McKean-Vlasov  \eqref{eqn_MVE_CE} and the continuity  \eqref{eqn_CE_as_MVE}. We simply write that $\rho_t = \rho_t^f$ and omit the integration domain $\X$.  Then 
	\[
	\begin{split}
		& \frac{\ud }{\ud t } \int \rho_t \log \frac{\rho_t }{\bar \rho_t }  = \int \partial_t \rho_t \log \frac{\rho_t}{\bar \rho_t} +  \int \rho_t   \partial_t \log \rho_t - \int \rho_t \partial_t \log \bar \rho_t  \\
		& = - \int  \udiv \Big(    \rho_t  \Big(\Big[- \nabla V (x) + K *  \rho_t -  \nu \nabla \log  \rho_t \Big] + \delta_t  \Big) \Big) \log \frac{\rho_t}{\bar \rho_t} \\
		& +  \int \frac{\rho_t}{\bar \rho_t}  \udiv \Big(   \bar \rho_t \Big(- \nabla V (x) + K * \bar \rho_t - \nu \nabla \log \bar \rho_t \Big)\Big), 
	\end{split}
	\]
	where we note that $\int \rho_t \partial_t \log  \rho_t= \int \partial_t \rho_t = 0$ since the total mass is preserved over time. 
	By integration by parts, one has 
	\[
	\begin{split}
		& \frac{\ud }{\ud t } \int \rho_t \log \frac{\rho_t }{\bar \rho_t }  = I_1 + I_2 + I_3+ \int \rho_t \delta_t \cdot \nabla \log \frac{\rho_t}{\bar \rho_t},
	\end{split}
	\]
	where $I_1, I_2, I_3$ denote the linear, nonlinear interaction, and diffusion parts separately. More precisely, by integration by parts,
	\[
	\begin{split}
		I_1 & = \int \udiv (\rho_t \nabla V(x)) \log \frac{\rho_t}{\bar \rho_t} - \int \frac{\rho_t}{\bar \rho_t} \udiv (\bar \rho_t \nabla V(x))  \\
		& = - \int \rho_t \nabla V(x) \cdot \nabla \log \frac{\rho_t}{\bar \rho_t} + \int \bar \rho_t  \nabla  \frac{\rho_t }{\bar \rho_t} \cdot  \nabla V(x) = 0. 
	\end{split}
	\]
	And 
	\[
	\begin{split} 
		I_2 & = - \int \udiv (\rho_t K * \rho_t ) \log \frac{\rho_t}{\bar \rho_t } + \int \frac{\rho_t }{\bar \rho_t} \udiv(\bar \rho_t K * \bar \rho_t)  \\
		& =  \int \rho_t K * \rho_t \nabla \log \frac{\rho_t}{\bar \rho_t } - \int \bar \rho_t K * \bar \rho_t \cdot \nabla \frac{\rho_t}{\bar \rho_t } \\
		& = \int \rho_t \nabla \log \frac{\rho_t}{\bar \rho_t} \cdot K * (\rho_t - \bar \rho_t). 
	\end{split} 
	\]
	Given that the kernel $K$ is divergence free, that is $\udiv K = 0$, one further has 
	\begin{equation}\label{NSE_new}
	\begin{split}
		I_2 & = - \int \rho_t \nabla \log \bar \rho_t  \cdot K *(\rho_t - \bar \rho_t ) + \int \nabla \rho_t \cdot K *(\rho_t - \bar \rho_t ) \\
		& = - \int \rho_t \nabla \log \bar \rho_t \cdot K * (\rho_t - \bar \rho_t). 
	\end{split}
	\end{equation}
	Note that this modification will be used in the proof in the 2D Navier-Stokes case. 
	Finally, all diffusion terms sum up to $I_3$ which can be further simplified as 
	\[
	\begin{split} 
		I_3  & = \nu \int \udiv (\rho_t \nabla \log  \rho_t ) \log \frac{\rho_t }{\bar \rho_t} - \nu \int \frac{\rho_t }{\bar \rho_t} \udiv (\bar \rho_t \nabla \log \bar \rho_t ) \\
		& = - \nu \int \rho_t \nabla \log \rho_t \cdot \nabla \log \frac{\rho_t}{\bar \rho_t} + \nu \int \bar \rho_t \nabla \log \bar  \rho_t \cdot  \nabla \frac{\rho_t }{\bar \rho_t } \\
		& = - \nu \int \rho_t |\nabla \log \frac{\rho_t }{\bar \rho_t}|^2. 
	\end{split} 
	\]
	We thus complete the proof of Lemma \ref{TimeEvolKLMV}.

\end{proof}

\begin{proof}[Proof of Lemma \ref{ModuEnergyEvo}]Recall that $K= - \nabla g$.  For simplicity, we write that $\rho_t = \rho_t^f$. Then 
	\[
	\begin{split} 
		& \frac{\ud }{\ud t } F(\rho_t, \bar \rho_t) = \frac{\ud }{\ud t } \frac{1 }{2} \int_{\mathcal{X}^2} g (x- y) \ud (\rho_t - \bar \rho_t)^{\otimes 2 } (x, y)  \\
		& = \int_\mathcal{X} g *(\rho_t - \bar \rho_t) (x) \big(\partial_t \rho_t (x) - \partial_t \bar \rho_t (x)  \Big) \ud x  \\
		& = \int g * (\rho_t - \bar \rho_t ) (x) \udiv \Big\{  \rho_t \Big( [\nabla V (x) - K * \rho_t + \nu \nabla  \log \rho_t ]  -  \delta_t \Big)   \\ & \quad \qquad \qquad \qquad \qquad - \bar \rho_t \Big( \nabla V(x) - K * \bar \rho_t + \nu \log \bar \rho_t \Big) \Big\} \\
		& = J_1+ J_2+ J_3 + J_4, 
	\end{split} 
	\]
	where $J_1, J_2, J_3, J_4$ denote the perturbation term, the linear difference term, the nonlinear difference term, and the diffusion term respectively. The perturbation term $J_1$ reads
	\[
	J_1 = - \int_{\mathcal{X}} g*(\rho_t - \bar \rho_t ) \udiv (\rho_t \delta_t ) = - \int_{\mathcal{X}}  \rho_t  K * (\rho_t - \bar \rho_t )  \cdot \delta_t. 
	\]
	By integration by parts, the linear difference term can be written as 
	\[
	\begin{split}
		&J_2 = \int_{\mathcal{X}}  g *(\rho_t - \bar \rho_t ) \udiv\Big( (\rho_t - \bar \rho_t ) \nabla V \Big)  = \int_{\mathcal{X}} K * (\rho_t - \bar \rho_t) (\rho_t - \bar \rho_t ) \nabla V \\
		&  = \frac{1}{2} \int_{\mathcal{X}^2} K (x - y)(\nabla V (x) - \nabla  V(y)) \ud (\rho_t - \bar \rho_t )^{\otimes 2}(x, y), 
	\end{split}
	\]
	where the last equality is true since $K = - \nabla g $ 
	is an odd function and we do the symmetrization trick, i.e. exchanging the role of $x$ and $y$ to another term and then taking the average. 
	
	The nonlinear difference term reads 
	\[
	\begin{split}
		& J_3 = - \int_{\mathcal{X}} g*(\rho_t - \bar \rho_t) \udiv\Big(\rho_t K * \rho_t - \bar \rho_t K * \bar \rho_t  \Big)  \\
		& = - \int_{\mathcal{X}} K * (\rho_t - \bar \rho_t ) (\rho_t K * (\rho_t - \bar \rho_t) - \int_{\mathcal{X}}  K * (\rho_t - \bar \rho_t) (\rho_t - \bar \rho_t ) K * \bar \rho_t  \\
		& = - \int_{\mathcal{X}}  \rho_t |K * (\rho_t - \bar \rho_t)|^2 - \frac{1}{2} \int K(x-y) (K * \bar \rho_t (x) - K * \bar \rho_t (y) ) \ud (\rho_t - \bar \rho_t)^{\otimes 2}(x, y), 
	\end{split}
	\]
	where again in the last term we do the symmetrization. 
	
	The diffusion term reads 
	\[
	\begin{split}
		& J_4 =  \nu \int g * (\rho_t - \bar \rho_t ) \udiv \Big( \rho_t  \nabla \log \rho_t - \bar \rho_t \nabla \log \bar \rho_t  \Big)  \\
		& = \nu \int K * (\rho_t - \bar \rho_t)  \rho_t \nabla \log \frac{\rho_t }{\bar \rho_t }   + \nu \int K * (\rho_t - \bar \rho_t ) (\rho_t - \bar \rho_t ) \nabla \log \bar \rho_t  \\
		& = \nu \int_{\mathcal{X}} \rho_t K *(\rho_t - \bar \rho_t)  \cdot \nabla \log \frac{\rho_t}{\bar \rho_t}  \\
		& \qquad + \frac{\nu }{2} \int_{\mathcal{X}^2} K (x-y)(\nabla \log \bar \rho_t (x) - \nabla \log \bar \rho_t (y)) \ud (\rho_t - \bar \rho_t )^{\otimes 2}. 
	\end{split} 
	\]
	To sum it up, we prove the thesis.

\end{proof}

\subsection{Proof of the 2D Navier-Stokes case} 
Now we proceed to control the growth of the KL divergence  $\mathbf{KL}(\rho_t^f \vert \bar \rho_t )$ for the 2D Navier-Stokes case. 
Since the Biot-Savart law is divergence free, by \eqref{NSE_new} in the proof of Lemma \ref{TimeEvolKLMV}, one has 
\begin{equation}\label{KL_NS_Evo}
	\frac{\ud }{\ud t} \int_{\Pi^d} \rho_t \log \frac{\rho_t }{\bar \rho_t} = - \nu \int_{ \Pi^d}\rho_t |\nabla \log \frac{\rho_t}{\bar \rho_t}|^2 - \int_{\Pi^d} \rho_t K * (\rho_t - \bar \rho_t ) \cdot \nabla  \log \bar   \rho_t +  \int_{\Pi^d} \rho_t \delta_t \cdot \nabla \log \frac{\rho_t }{\bar \rho_t }. 
\end{equation}

Recall that we  write the kernel $K= (K_1, \cdots, K_d)$ and its component $K_i = \sum_{j=1}^d \partial_{x_j} U_{ij}(x)$, where $U= (U_{ij})_{1 \leq i, j \leq d}$ is a matrix-valued potential function for instance can be defined as in  \eqref{eqn_K_as_divergence}. 
 Consequently 
\[
- \int \rho_t K * (\rho_t - \bar \rho_t) \cdot \nabla \log \bar \rho_t = - \sum_{i, j=1}^d \int \rho_t \partial_{x_j} U_{ij} * (\rho_t - \bar \rho_t) \partial_{x_i} \log \bar \rho_t, 
\]
which equals to 
\[
\sum_{i, j=1}^d  \int U_{ij}* (\rho_t - \bar \rho_t) \partial_{x_j}\big( \frac{\rho_t}{\bar \rho_t} \partial_{x_i} \bar \rho_t \big) = A + B 
\]
by integration by parts, 
where further 
\[
A= \sum_{i, j=1}^d  \int V_{i  j} * (\rho_t - \bar \rho_t) \, \partial_{x_i} \bar \rho_t \,  \partial_{x_j} \, \frac{\rho_t}{\bar \rho_t} = \int U * (\rho_t - \bar \rho_t) : \nabla \bar \rho_t \otimes \nabla \frac{\rho_t }{\bar \rho_t}, 
\]
and 
\[
B = \sum_{i, j=1}^d \int \rho_t U_{ij} *(\rho_t - \bar \rho_t) \frac{\partial_{x_i x_j}^2 \bar \rho_t }{\bar \rho_t} = \int \rho_t U * (\rho_t - \bar \rho_t) : \frac{\nabla^2 \bar \rho_t}{\bar \rho_t}. 
\]
Noticing that $\nabla \frac{\rho_t}{\bar \rho_t} =\frac{\rho_t}{\bar \rho_t} \nabla \log \frac{\rho_t }{\bar \rho_t }$, one estimates $A$ as follows 
\[
\begin{split}
	A & = \int  \rho_t U * (\rho_t - \bar \rho_t) : \nabla \log \bar \rho_t \otimes \nabla \log  \frac{\rho_t }{\bar \rho_t} \\
	&  \leq \frac{\nu}{4} \int \rho_t |\nabla \log \frac{\rho_t }{\bar \rho_t}|^2 + \frac{1}{\nu} \int \rho_t | (\nabla \log \bar \rho_t)^\top  \, U*(\rho - \bar \rho) |^2 \\
	& \leq \frac{\nu}{4} \int \rho_t |\nabla \log \frac{\rho_t }{\bar \rho_t}|^2 + \frac{1}{\nu } \|U\|_{L^\infty}^2 \|\nabla \log \bar \rho_t\|_{L^\infty}^2  \|\rho_t - \bar \rho_t\|_{L^1}^2,  
\end{split} 
\]
and again by  Csisz\'ar–Kullback–Pinsker inequality, one has that 
\[
A \leq \frac{\nu}{4} \int \rho_t |\nabla \log \frac{\rho_t }{\bar \rho_t}|^2 + \frac{2}{\nu } \|U\|_{L^\infty}^2 \|\nabla \log \bar \rho_t\|_{L^\infty}^2 \int \rho_t \log \frac{\rho_t}{\bar \rho_t}. 
\]

Now it only remains to control $B$. Recall the following famous Gibbs inequality 
\begin{lemma}[Gibbs inequality]\label{Gibbs} For any parameter $\eta > 0$, and  probability measures $\rho, \bar \rho \in \mathcal{P}(\mathcal{X}) \cap L^1(\mathcal{X})$, and $\phi$  a real-valued function defined on $\mathcal{X}$,  one has the following change of reference measure inequality 
	\[
	\int_{\mathcal{X}} \rho(x) \phi(x) \ud x    \leq \frac{1}{\eta } \Big( \int_{\mathcal{X}} \rho(x) \log \frac{\rho(x)}{\bar \rho(x) } \ud x  + \log \int_{\mathcal{X}} \bar \rho(x) \exp (\eta \phi (x))  \ud x \Big). 
	\]
	
\end{lemma}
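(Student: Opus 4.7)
The plan is to prove the Gibbs/Donsker--Varadhan change of measure inequality via Jensen's inequality applied to the concave function $\log$. First I would reduce to the nontrivial case where $\rho$ is absolutely continuous with respect to $\bar \rho$ and where $\int_{\mathcal{X}} \bar \rho \exp(\eta \phi) \, \ud x$ is finite; if either condition fails, the right-hand side is $+\infty$ and the inequality is trivial (assuming the left-hand side integral exists; otherwise one can truncate $\phi$ by $\phi_N = \phi \wedge N$, apply the inequality, and pass to the limit using monotone convergence).

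The core algebraic identity is the decomposition
\[
\eta \phi(x) = \log \frac{\bar \rho(x) \, e^{\eta \phi(x)}}{\rho(x)} + \log \frac{\rho(x)}{\bar \rho(x)},
\]
valid $\rho$-almost everywhere under the absolute continuity reduction. Integrating against $\rho$ gives
\[
\eta \int_{\mathcal{X}} \rho \, \phi \, \ud x = \int_{\mathcal{X}} \rho \log \frac{\bar \rho \, e^{\eta \phi}}{\rho} \, \ud x + \int_{\mathcal{X}} \rho \log \frac{\rho}{\bar \rho} \, \ud x.
\]
The second term is exactly $\mathbf{KL}(\rho, \bar \rho)$. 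For the first term I would invoke Jensen's inequality with the concave function $\log$ and the probability measure $\rho(x) \ud x$:
\[
\int_{\mathcal{X}} \rho(x) \log \frac{\bar \rho(x) \, e^{\eta \phi(x)}}{\rho(x)} \, \ud x \;\leq\; \log \int_{\mathcal{X}} \rho(x) \cdot \frac{\bar \rho(x) \, e^{\eta \phi(x)}}{\rho(x)} \, \ud x \;=\; \log \int_{\mathcal{X}} \bar \rho(x) \, e^{\eta \phi(x)} \, \ud x.
\]
Dividing through by $\eta > 0$ yields the claim.

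The only genuine obstacle is a measurability/integrability nuisance rather than a substantive one: making sense of $\log(\bar \rho \, e^{\eta \phi} / \rho)$ where $\rho$ vanishes, and handling the case when $\phi$ is not bounded above or when $\int \bar \rho e^{\eta \phi} = \infty$. Both issues are dispatched by the standard truncation $\phi_N = (\phi \wedge N) \vee (-N)$, applying the inequality to $\phi_N$ (where all integrals are finite since $\rho, \bar \rho \in L^1$), and then letting $N \to \infty$ using monotone/dominated convergence on each side. No new ideas beyond Jensen are required.
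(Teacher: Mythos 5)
Your proof is correct. The paper does not supply its own argument for this lemma---it only cites Section 13.1 of Erd\H{o}s and Yau, whose proof is precisely the Jensen-inequality/change-of-reference-measure decomposition you use---so there is nothing further to compare or fix.
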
 
The proof of this inequality can be found in section 13.1 in  \citep{erdHos2017dynamical}. 

To control $B$, we write that $\phi= U * (\rho_t - \bar \rho_t) : \frac{\nabla^2 \bar \rho_t}{\bar \rho_t}$ and thus  $B = \int \rho_t  \phi$.  We choose a positive parameter $\eta>0$ such that 
\[
\frac{1}{\eta} = 2 \|U \|_{L^\infty} \Big\| \frac{\nabla^2 \bar \rho_t}{\bar \rho_t} \Big\|_{L^\infty}. 
\]
Now we apply Lemma \ref{Gibbs} to obtain that 
\[
B = \int \rho_t \phi \leq \frac{1}{\eta} \left( \int \rho_t  \log \frac{\rho_t }{\bar \rho_t} + \log \int \bar \rho_t \exp(\eta \phi )\right). 
\]
Note that $\eta >0$ is chosen so small such that  
\[
\begin{split}
	\eta \| \phi\|_{L^\infty} &  \leq  \frac{1}{2 \|U \|_{L^\infty} \Big\| \frac{\nabla^2 \bar \rho_t}{\bar \rho_t}\Big\|_{L^\infty}}  \|U\|_{L^\infty}  \|\rho_t - \bar \rho_t\|_{L^1}  \Big\| \frac{\nabla^2 \bar \rho_t}{\bar \rho_t}\Big\|_{L^\infty}   \\
	& \leq \frac 1 2 \|\rho_t - \bar \rho_t \|_{L^1} \leq 1, 
\end{split} 
\]
since for two probability densities it always holds $\|\rho_t - \bar \rho_t \|_{L^1} \leq 2$. Consequently, applying the inequality $\exp(x) \leq 1 + x + \frac e 2 x^2$ for $|x|\leq 1$, we have 
\[
\int \bar \rho_t \exp(\eta \phi ) \leq \int \bar \rho_t \left(  1 + \eta \phi + \frac e 2 \eta^2 \phi^2 \right) \leq 1 + 0 + \frac{e}{2}  \Big( \frac 1 2 \| \rho_t - \bar \rho_t\|_{L^1} \Big)^2  \leq 1 + \frac{e}{4} \mathbf{KL}(\rho_t \vert \bar \rho_t), 
\]
where 
\[
\int \bar \rho_t \phi = \int U* (\rho_t - \bar \rho_t) : \nabla^2 \bar \rho_t =\int  \sum_{i, j=1}^d \partial_{x_i x_j}U * (\rho_t - \bar \rho_t) \bar \rho_t = \int \udiv K *(\rho_t - \bar \rho_t ) \bar \rho_t = 0, 
\]
since $\udiv K =0$. 

To sum it up, in particular since $\log (1 + x ) \leq x $ for $x > 0$, one has 
\[
B \leq \frac{1}{\eta} \Big(  1 + \frac e 4 \Big) \mathbf{KL}(\rho_t \vert \bar \rho_t ) \leq 4  \|U \|_{L^\infty} \Big\| \frac{\nabla^2 \bar \rho_t}{\bar \rho_t} \Big\|_{L^\infty} \mathbf{KL}(\rho_t \vert \bar \rho_t ). 
\]

Combining \eqref{KL_NS_Evo}, the estimates for $A$ and $B$, one has 
\begin{equation}\label{NSEFinal}
	\begin{split}
		& \frac{\ud }{\ud t} \int \rho_t \log \frac{\rho_t}{\bar \rho_t} \leq - \frac{3 \nu  }{4}  \int \rho_t |\nabla \log \frac{\rho_t}{\bar \rho_t}|^2 + M(t) \int \rho_t \log \frac{\rho_t }{\bar \rho_t} + \int \rho_t \delta_t \cdot \nabla \log \frac{\rho_t}{\bar \rho_t} \\
		& \leq - \frac \nu  2 \int \rho_t |\nabla \log \frac{\rho_t}{\bar \rho_t}|^2 +M(t) \int \rho_t \log \frac{\rho_t }{\bar \rho_t} + \frac{1}{\nu } \int \rho_t |\delta_t|^2 
	\end{split}
\end{equation}
where 
\[
M(t) = \frac{2}{\nu } \|U\|_{L^\infty}^2 \|\nabla \log \bar \rho_t\|_{L^\infty}^2 + 4  \|U \|_{L^\infty} \Big\| \frac{\nabla^2 \bar \rho_t}{\bar \rho_t} \Big\|_{L^\infty} = M(t; \nu, U, \bar \rho_t). 
\]
Since the matrix-valued potential function $U$ is bounded ($\|U(\vx)\|_{op}\leq1/4$ when $U$ takse the form (\ref{eqn_K_as_divergence})), and under suitable assumptions for the initial data $\bar \rho_0$ (for instance $\bar \rho_0 \in C^3$ and there exists $c>1$ s.t. $\frac 1 c \leq \bar \rho \leq c $), one can obtain $\sup_{t \in [0, T] } M(t) \leq M < \infty$. We  recall Theorem 2 in  \citep{guillin2021uniform} as below for completeness. 

\begin{theorem} Given the initial data $\bar \rho_0 \in C^\infty (\Pi^d)$, such that there exists $c>1$, $\frac{1}{c} \leq \bar \rho_0 \leq c$. Then the vorticity formulation of the 2D Navier-Stokes equation 
	\[
	\partial_t \bar\rho_t + \udiv (\bar \rho_t K * \bar \rho_t ) = \nu \Delta \bar \rho_t, \quad \bar \rho(0, x) = \bar \rho_0(x), 
	\]
	has a unique bounded solution $\bar \rho(t, x) \in C^\infty([0, \infty) \times \Pi^d)$, and for any $t>0$,  for any $x \in \Pi^d$,  it holds that $\frac{1}{c} \leq \bar \rho(t, x) \leq c$. 
	
\end{theorem}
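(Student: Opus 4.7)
The plan is to prove this in three stages: local well-posedness in a suitable Sobolev space, a priori pointwise bounds via the maximum principle, and global-in-time $C^\infty$ regularity via parabolic bootstrap. Throughout I will exploit that the Biot-Savart kernel is divergence-free, $\udiv K = 0$, so the equation admits an equivalent non-divergence form $\partial_t \bar\rho + u \cdot \nabla \bar\rho = \nu \Delta \bar\rho$ with $u = K * \bar\rho$ divergence-free.

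First, I would establish local existence and uniqueness of a classical solution $\bar\rho \in C([0, T_*]; H^s(\Pi^d))$ for $s$ large (say $s > d/2 + 2$) by a standard Picard/fixed-point iteration. At step $k$, freeze the velocity as $u^{(k)} = K * \bar\rho^{(k)}$ and solve the linear parabolic equation for $\bar\rho^{(k+1)}$. On the torus, Biot-Savart maps $H^s$ to $H^{s+1}$ and is continuous on $L^p$-based Sobolev spaces by Calderón–Zygmund theory. Combining this with the smoothing of $e^{\nu t \Delta}$, the map $\bar\rho^{(k)} \mapsto \bar\rho^{(k+1)}$ is a contraction on a sufficiently short interval $[0, T_*]$ in a ball of $C([0, T_*]; H^s)$, yielding a unique local classical solution.

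Next, I would establish the uniform bounds $1/c \leq \bar\rho(t,x) \leq c$ directly from the non-divergence form. Since $u = K * \bar\rho$ is smooth and divergence-free on the existence interval, the equation is linear parabolic with zero-order term absent, so the weak maximum principle applies: the quantity $\sup_x \bar\rho(t,x)$ is non-increasing in $t$ and $\inf_x \bar\rho(t,x)$ is non-decreasing in $t$. Hence $1/c \leq \bar\rho_0 \leq c$ propagates to all $t$ in the existence interval. Consequently $\|\bar\rho(t)\|_{L^\infty} \leq c$ uniformly, and Biot-Savart immediately gives $\|u(t)\|_{L^p} \leq C_p$ for every $p < \infty$, in fact $\|u(t)\|_{C^\alpha} \leq C_\alpha$ for any $\alpha \in (0, 1)$ via Sobolev embedding.

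Finally, for global-in-time $C^\infty$ regularity, I would bootstrap. With $u \in L^\infty_t C^\alpha_x$ and $\bar\rho_0 \in C^\infty$, Schauder estimates for parabolic equations with Hölder drift upgrade $\bar\rho$ to $C^{1+\alpha, (1+\alpha)/2}_{x,t}$ on every compact subinterval; differentiating the equation in space and iterating (each derivative $\partial^\beta \bar\rho$ satisfies the same type of parabolic equation with lower-order terms bounded in terms of previously controlled quantities), one obtains $\bar\rho \in C^\infty([0, T_*] \times \Pi^d)$. The crucial point for \emph{global} extension is that every higher Sobolev norm $\|\bar\rho(t)\|_{H^m}$ satisfies a Grönwall-type inequality whose coefficients depend only on $\|\bar\rho\|_{L^\infty}$ (which is bounded by $c$), reflecting the well-known fact that 2D vorticity dynamics has no stretching mechanism; hence no finite-time blow-up occurs and the solution extends to $[0, \infty)$, with the maximum-principle bound $1/c \leq \bar\rho(t,x) \leq c$ preserved for all $t > 0$.

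The main obstacle is cleanly closing the regularity bootstrap while handling the nonlocal, marginally singular Biot-Savart convolution: one must verify that at each step the velocity $u = K * \bar\rho$ inherits just enough smoothness from $\bar\rho$ to feed back into the Schauder/energy estimates without losing derivatives, which is where divergence-freeness of $K$ and the $L^\infty$ a priori bound from Step 2 together do the essential work.
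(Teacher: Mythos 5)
The paper does not actually prove this theorem; it is stated verbatim as a recollection of Theorem~2 from \citet{guillin2021uniform}, cited ``for completeness.'' So there is no proof in the paper against which to compare yours, and the authors themselves treat this as a known black-box fact about 2D Navier--Stokes on the torus.

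Your sketch is the standard and essentially correct route, and it is likely close in spirit to what a self-contained proof (such as the one in Guillin et al.) would do: exploit $\udiv K = 0$ to pass to the non-divergence transport-diffusion form $\partial_t \bar\rho + u\cdot\nabla\bar\rho = \nu\Delta\bar\rho$, get the two-sided $L^\infty$ bounds from the parabolic maximum/minimum principle (no zeroth-order term, incompressible drift), and then use the a priori $L^\infty$ control of the scalar vorticity --- the 2D ``no vortex stretching'' fact --- to prevent blow-up and bootstrap to $C^\infty$. Two points deserve more care if you wanted to make this airtight. First, on $\Pi^d$ the Biot--Savart operator is defined on mean-zero data; since $\bar\rho_t$ is a probability density with $\int \bar\rho_t = 1 \neq 0$, one must specify $K*\bar\rho_t := K*(\bar\rho_t - |\Pi^d|^{-1})$ (equivalently, take the periodic kernel with zero mean), and one should check that the constant shift is harmless in the maximum-principle and regularity estimates. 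Second, your global-extension step asserts that all $H^m$ norms obey Gr\"onwall inequalities with coefficients controlled by $\|\bar\rho\|_{L^\infty}$ alone; as stated this is slightly optimistic for the inviscid problem (bounded vorticity only yields a log-Lipschitz velocity), but for the \emph{viscous} equation here it is fine because parabolic smoothing instantly upgrades $\nabla u$ to $L^\infty$ on $(0,T]$, and that is what feeds the energy estimates. With those two remarks filled in, your argument is a correct proof of the cited theorem, which is more than the paper itself supplies.
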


Finally,  we simplify \eqref{NSEFinal} to obtain that 
\[
\frac{\ud }{\ud t} \int \rho_t \log \frac{\rho_t}{\bar \rho_t} 
\leq  M \int \rho_t \log \frac{\rho_t }{\bar \rho_t} + \frac{1}{\nu } \int \rho_t |\delta_t|^2, 
\]
where $M = \sup_{t \in [0, T] } M(t; \nu, U, \bar \rho_t)< \infty$. By Gronwall inequality, one finally obtains that 
\[
\sup_{t \in [0, T]} \int_{\Pi^d} \rho_t \log \frac{\rho_t}{\bar \rho_t} \ud x \leq \frac{1}{\nu} \exp(M T ) R(\theta). 
\]

As noted in \citep{guillin2021uniform},in particular Corollary 2 there, one can improve the above time-dependent estimate ($\exp(MT)$) to uniform-in-time estimate by using Logarithmic Sobolev inequality. Indeed, given that $\frac 1 c \leq \bar \rho_t \leq c$, one has that
\begin{equation}
    \label{LogSoboIne}
    \int_{\Pi^d} \rho_t \log \frac{\rho_t}{\bar \rho_t} \ud x  \leq \frac{c^2}{8 \pi^2 } \int_{\Pi^d} \rho_t |\nabla_x \log \frac{\rho_t}{\bar \rho_t }|^2 \ud x. 
 \end{equation}
 Combining \eqref{LogSoboIne} and  \eqref{NSEFinal}, one obtains that 
 \[
\frac{\ud }{\ud t} \mathbf{KL}(\rho_t \vert \bar \rho_t) \leq \Big( M(t) - \frac{4 \pi^2 \nu  }{c^2} \Big) \mathbf{KL} (\rho_t \vert \bar \rho_t) +  \frac{1}{\nu } \int \rho_t |\delta_t|^2. 
 \]
Multiplying the factor $\exp(\frac{4 \pi^2 \nu  }{c^2} t - \int_0^t M(s) \ud s)$ and noting in particular $\mathbf{KL} (\rho_0 \vert \bar \rho_0) =0$,  one obtains that 
\[
\mathbf{KL}(\rho_t \vert \bar \rho_t) \leq \int_0^t \exp(\frac{4 \pi^2 \nu  }{c^2} (s- t) + \int_s^t M(u) \ud u ) f(s) \ud s. 
\]
Indeed, under the assumptions as in Theorem \ref{NSMainEstimate}, one has that there exists a universal $C>0$, such that 
\[
\int_0^\infty M(t) \ud t = C < \infty. 
\]
We thus immediately obtain that 
\[
\sup_{t \in [0, T] }\mathbf{KL}(\rho_t \vert \bar \rho_t) \leq  \frac{e^C}{\nu} \int_0^T \int_{\Pi^d} \rho_t |\delta_t|^2 \ud x \ud t. 
\]
This completes the proof of Theorem \ref{NSMainEstimate}.


\paragraph{The McKean-Vlasov PDEs, i.e. \eqref{eqn_MVE}, with bounded interactions $K \in L^\infty$ }  
As mentioned in the main body of this article,  it is much easier to  obtain the stability estimate for the McKean-Vlasov PDE with bounded interactions. 
\begin{theorem}[Stability Estimate for McKean-Vlasov PDE with $K \in L^\infty$] \label{theorem_bounded_K}
	Assume that $K \in L^\infty$. One has the estimate that 
	\[
	\sup_{t \in [0, T]} \mathbf{KL}(\rho_t^f \vert \bar \rho_t ) \leq   \frac{1}{\nu}\exp\Big(\frac{2 \|K \|_{L^\infty}^2}{\nu } T  \Big)  R(f), 
	\]
	where we recall the self-consitency potential/loss function $R(\theta)$ reads 
	\[
	R(f) = \int_0^T \int_{\X} |f(t, x) + \nabla V (x) - K * \rho_t^f + \nu \nabla \log \rho_t^\theta  |^2 \ud \rho_t^f(x) \ud t. 
	\]
\end{theorem}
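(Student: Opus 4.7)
The plan is to start from Lemma \ref{TimeEvolKLMV}, which gives the exact time evolution of $\mathbf{KL}(\rho_t^f, \bar\rho_t)$ as a sum of three terms: the negative Fisher-information dissipation $-\nu\int \rho_t^f|\nabla\log(\rho_t^f/\bar\rho_t)|^2$, an interaction cross term $\int \rho_t^f\, K*(\rho_t^f-\bar\rho_t)\cdot \nabla\log(\rho_t^f/\bar\rho_t)$, and the perturbation cross term $\int \rho_t^f\,\delta_t\cdot \nabla\log(\rho_t^f/\bar\rho_t)$. The whole strategy is to dominate both cross terms by splitting them through Young's inequality so that the $|\nabla\log(\rho_t^f/\bar\rho_t)|^2$ pieces are absorbed into the dissipation, while the leftover pieces are controlled, respectively, by the KL divergence itself (using boundedness of $K$) and by the pointwise self-consistency residual $|\delta_t|^2$.

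Concretely, I would first apply Young's inequality $ab\leq \tfrac{\nu}{4}a^2+\tfrac{1}{\nu}b^2$ to each cross term to get
\[
\frac{\ud}{\ud t}\mathbf{KL}(\rho_t^f,\bar\rho_t)\leq -\frac{\nu}{2}\int \rho_t^f\Big|\nabla\log\tfrac{\rho_t^f}{\bar\rho_t}\Big|^2+\frac{1}{\nu}\int \rho_t^f\,|K*(\rho_t^f-\bar\rho_t)|^2+\frac{1}{\nu}\int \rho_t^f\,|\delta_t|^2.
\]
For the middle term I would use $\|K*(\rho_t^f-\bar\rho_t)\|_{L^\infty}\leq \|K\|_{L^\infty}\|\rho_t^f-\bar\rho_t\|_{L^1}$, followed by the Csisz\'ar--Kullback--Pinsker inequality $\|\rho_t^f-\bar\rho_t\|_{L^1}^2\leq 2\,\mathbf{KL}(\rho_t^f,\bar\rho_t)$, which turns it into $\tfrac{2\|K\|_{L^\infty}^2}{\nu}\mathbf{KL}(\rho_t^f,\bar\rho_t)$. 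Discarding the nonpositive Fisher-information term then yields the differential inequality
\[
\frac{\ud}{\ud t}\mathbf{KL}(\rho_t^f,\bar\rho_t)\leq \frac{2\|K\|_{L^\infty}^2}{\nu}\mathbf{KL}(\rho_t^f,\bar\rho_t)+\frac{1}{\nu}\int \rho_t^f\,|\delta_t|^2.
\]

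Finally, Gr\"onwall's inequality applied on $[0,t]$ with the initial condition $\mathbf{KL}(\rho_0^f,\bar\rho_0)=0$ (since $\rho_0^f=\bar\rho_0$) gives
\[
\mathbf{KL}(\rho_t^f,\bar\rho_t)\leq \frac{1}{\nu}\exp\!\Big(\tfrac{2\|K\|_{L^\infty}^2}{\nu}T\Big)\int_0^T\!\!\int \rho_s^f\,|\delta_s|^2\,\ud s=\frac{1}{\nu}\exp\!\Big(\tfrac{2\|K\|_{L^\infty}^2}{\nu}T\Big)R(f),
\]
uniformly for $t\in[0,T]$, which is exactly the claim. Compared with the singular Coulomb and Biot--Savart cases, no real obstacle appears: the $L^\infty$ bound on $K$ directly replaces the delicate modulated-energy and divergence-form-potential arguments, and no Gibbs change-of-reference-measure trick is needed. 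The only minor care point is keeping the split of the two cross terms consistent so that the combined $-\tfrac{\nu}{2}\int \rho_t^f|\nabla\log(\rho_t^f/\bar\rho_t)|^2$ dissipation has enough budget to absorb both $\tfrac{\nu}{4}$ pieces, which is precisely what the above Young splitting achieves.
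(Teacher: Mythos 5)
Your proposal is correct and follows the paper's proof essentially step for step: start from Lemma \ref{TimeEvolKLMV}, split the two cross terms via Young's (equivalently Cauchy--Schwarz) inequality with weights $\nu/4$ and $1/\nu$ to absorb both into the $-\nu$ Fisher-information dissipation leaving $-\nu/2$, control the interaction piece by $\|K\|_{L^\infty}$ plus the Csisz\'ar--Kullback--Pinsker inequality, drop the remaining dissipation, and conclude with Gr\"onwall using $\rho_0^f=\bar\rho_0$. There is no meaningful difference from the paper's argument.
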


\begin{proof}
	Here we give the control of the growth of the KL divergence for systems with bounded kernels.  
	Applying Cauchy-Schwarz inequality twice for the entropy dissipation terms  in Lemma \ref{TimeEvolKLMV} to obtain 
	\[
	\int_{\Pi^d} \rho_t K * (\rho_t - \bar \rho_t ) \cdot \nabla  \log  \frac{\rho_t }{\bar \rho_t}  \leq \frac{\nu}{4} \int \rho_t |\nabla \log \frac{\rho_t}{\bar \rho_t}|^2 + \frac{1}{\nu } \int \rho_t |K * (\rho_t - \bar \rho_t)|^2, 
	\]
	and 
	\[
	\int_{\Pi^d} \rho_t \delta_t \cdot \nabla \log \frac{\rho_t }{\bar \rho_t } \leq \frac{\nu}{4} \int \rho_t |\nabla \log \frac{\rho_t}{\bar \rho_t}|^2 + \frac{1}{\nu } \int \rho_t |\delta_t|^2. 
	\]
	Furthermore, 
	\[
	\int \rho_t |K * (\rho_t - \bar \rho_t)|^2 \leq \|K \|_{L^\infty}^2  \|\rho_t - \bar \rho_t\|_{L^1}^2 \leq  2 \|K \|_{L^\infty}^2  \int \rho_t \log \frac{\rho_t}{\bar \rho_t }, 
	\]
	where the last inequality is simply the Csisz\'ar–Kullback–Pinsker inequality \citep{villani2009optimal}. Combining the above estimates, we obtain that given that $K \in L^\infty$, 
	\[
	\frac{\ud }{\ud t} \int_{\Pi^d} \rho_t \log \frac{\rho_t }{\bar \rho_t} = - \frac{\nu}{2}   \int_{ \Pi^d}\rho_t |\nabla \log \frac{\rho_t}{\bar \rho_t}|^2 + \frac{2 \|K \|_{L^\infty}^2}{\nu } \int \rho_t \log \frac{\rho_t}{\bar \rho_t } + \frac{1}{\nu } \int \rho_t |\delta_t|^2. 
	\]
	Currently, we are not interested in the long time behavior, so we first ignore the negative term above to obtain that 
	\[
	\frac{\ud }{\ud t} \int_{\Pi^d} \rho_t \log \frac{\rho_t }{\bar \rho_t}  \leq    \frac{2 \|K \|_{L^\infty}^2}{\nu } \int \rho_t \log \frac{\rho_t}{\bar \rho_t } + \frac{1}{\nu } \int \rho_t |\delta_t|^2. 
	\]
	By Gronwall inequality, we obtain that
	\[
	\int_{\Pi^d} \rho_t \log \frac{\rho_t }{\bar \rho_t} \leq  \frac{1}{\nu}\exp\Big(\frac{2 \|K \|_{L^\infty}^2}{\nu } t \Big)   \int_0^t \int \rho_s |\delta_s|^2 \ud x  \ud s. 
	\]
	
	\end{proof}

\subsection{The McKean-Vlasov equation with Coulomb interactions}

\begin{proof}[Proof of Theorem \ref{ThmCoul}]  We first prove the case when $\nu >0$. Applying Cauchy-Schwarz inequality to the right-hand side of $\frac{\ud }{\ud t} E(\rho_t^f, \bar \rho_t)$ in Lemma \ref{TimeEvoMFE},  one has  
\[
\begin{split}
	\frac{\ud }{\ud t } E(\rho_t^f, \bar \rho_t)  & \leq  \frac 1 2  \int_{\mathcal{X}} \rho_t^f \, |\delta_t |^2 \ud x  \\
	&  - \frac{1}{2} \int_{\mathcal{X}^2} K(x-y) \cdot \Big( \mathcal{A}[\bar \rho_t](x) - \mathcal{A}[\bar \rho_t](y) \Big) \ud (\rho_t^f - \bar \rho_t )^{\otimes 2 }(x, y). 
\end{split}
\]
By Lemma 5.2 in \cite{bresch2019modulated}, as long as the ground truth ``velocity field" $\mathcal{A}[\bar \rho_t]$ is Lipschitz, i.e. $\mathcal{A}[\bar \rho] \in W^{1, \infty}$, or equivalently  $\nabla^2 V \in W^{1, \infty}, \nabla^2 \log \bar \rho_t \in L^\infty, K * \bar \rho_t \in W^{1, \infty}$, using the particular structure introduced by the Coulomb interactions (note that $- \Delta g = \delta_0$ and $K = - \nabla g$), we have the estimate 
\[
\begin{split}
	&  - \frac{1}{2} \int_{\mathcal{X}^2} K(x-y) \cdot \Big( \mathcal{A}[\bar \rho_t](x) - \mathcal{A}[\bar \rho_t](y) \Big) \ud (\rho_t^f - \bar \rho_t )^{\otimes 2 }(x, y) \\
	& \leq C \|\nabla \mathcal{A}[\bar \rho_t]\|_{L^\infty}  F(\bar \rho_t^f, \bar \rho_t).   \\
\end{split} 
\] 
This estimate can be obtained either by Fourier method  \citep{bresch2019modulated} or by the stress-energy tensor approach as in \citep{serfaty2020mean}. 
We emphasize that those assumptions made on $(\bar \rho_t)_{t \in [0, T]}$  can be obtained by propagating similar conditions on the initial data $\bar \rho_0$. 
This estimate actually holds for more general choices of $g$ or  $K$. See more examples including Riesz kernels in \citep{bresch2019modulated}.  Moreover, the Lipschitz regularity of $\mathcal{A}[\bar \rho_t]$ can also be relaxed a bit. See for instance in \citep{rosenzweig2022mean}.

Combining  previous two estimates, one has 
	\[
	\frac{\ud }{\ud t } E(\rho_t^f, \bar \rho_t)   \leq  \frac 1 2  \int_{\mathcal{X}} \rho_t^f \, |\delta_t |^2 \ud x  + C C_1  F(\rho_t^f, \bar \rho_t) \leq \frac 1 2  \int_{\mathcal{X}} \rho_t^f \, |\delta_t |^2 \ud x +  C C_1   E(\rho_t^f, \bar \rho_t). 
	\]
	Then applying Gronwall inequality concludes the proof of the case when $\nu >0$.
	
   Now we prove the deterministic case when $\nu=0$. Now the relative entropy or KL divergence does not play a role since there is no Laplacian term in \eqref{eqn_MVE}.  Lemma \ref{ModuEnergyEvo} now reads 
   	\[
   \begin{split}
   	\frac{\ud }{\ud t } F(\rho_t^f, \bar \rho_t)&  =  - \int_{\mathcal{X}} \rho_t^f |K *(\rho_t^f - \bar \rho)|^2 - \int_{\mathcal{X}} \rho_t^f \, \delta_t \cdot K * (\rho_t^f - \bar \rho_t ) \\
   	&  - \frac{1}{2} \int_{\mathcal{X}^2} K(x-y) \cdot \Big( \mathcal{A}[\bar \rho_t](x) - \mathcal{A}[\bar \rho_t](y) \Big) \ud (\rho_t^f - \bar \rho_t )^{\otimes 2 }(x, y). \\
      \end{split}
   \]
   Applying Cauchy-Schwarz to the 2nd term in the right-hand side above, we obtain that 
   \[
    \begin{split}
    	\frac{\ud }{\ud t } F(\rho_t^f, \bar \rho_t) \leq    \frac 1 2 \int_{\mathcal{X}} \rho_t^f \, |\delta_t|^2     - \frac{1}{2} \int_{\mathcal{X}^2} K(x-y) \cdot \Big( \mathcal{A}[\bar \rho_t](x) - \mathcal{A}[\bar \rho_t](y) \Big) \ud (\rho_t^f - \bar \rho_t )^{\otimes 2 }(x, y). \\
    \end{split}
   \]
   Again assuming that the ``velocity field'' $\mathcal{A}[\bar \rho_t](\cdot)$ is Lipschitz will give us 
   \[
   \frac{\ud }{\ud t } F(\rho_t^f, \bar \rho_t) \leq  \frac 1 2 \int_{\mathcal{X}} \rho_t^f \, |\delta_t|^2  + C C_1 F(\rho_t^f, \bar \rho_t). 
   \] 
   Applying Gronwall inequality again conclude all the proof.

\end{proof}

\clearpage
\clearpage
\section{Approximation Error of Neural Network} \label{appendix_approximation_error_NN}
We show that in a function class $\gF$ with sufficient capacity, there exists at least one element $\hat f\in \gF$ such that $R(\hat f)$ is small.
In particular, we are interested in the function class of neural networks.

We will focus on the case where the domain is the torus $\X = \Pi^d$, i.e. a $d$ dimensional box with size $L$ endowed with the periodic boundary condition. For the simplicity of notations, we denote the underlying velocity by $\bar f_t = \gA[\bar \rho_t]$, where the operator $\gA$ is defined in \eqref{eqn_operator_A}.

In the following, we focus on the Coulomb case where $K$ is defined in \eqref{eqn_coulomb_interaction}. The Biot-Savart case (\ref{eqn_nse}) can be treated similarly.
\begin{proof}[Proof of Theorem \ref{thm_approximation_error_NN}]
	In \eqref{eqn_pathwise_reformulation}, we showed that for any hypothesis velocity $f$, the \EINN\ loss $R(f)$ admits the trajectory-wise reformulation:
	\begin{equation}
		R(f) = \int_\X \ud \vx_0 \bar \rho_0(\vx) \int_0^T \ud t\|\delta^f_t\circ X^f_t(\vx_0)\|^2,
	\end{equation}
	where we recall the definition of $\delta^f_t$ in \eqref{eqn_perturbation}. 
    Note that, as a general principle, in this proof, we will use the superscript to emphasize the dependence on a velocity $f$, e.g. the flow map $X^f_t$.
	
	From Assumption \ref{ass_appendix_approximation} we know that there exists $\hat f\in\gF$ such that $\|\bar f - \hat f\|_{W^{2,\infty}(\X)} \leq \epsilon$.
	In the following, we show that $R(\hat f)$ is small.
	
	Define 
	\begin{equation}
		A^f_\vx(t) \defi \int_0^t \|\delta_s^f\circ X_s^f(\vx)\|^2 \ud s.
	\end{equation}
	We have
	\begin{equation}
		R(f) = \int_\X A_\vx^f(T) \ud \bar \rho_0(\vx).
	\end{equation}
	Recall that $\bar f$ denotes the underlying velocity and hence $\delta_t^{\bar f} \equiv 0$ and $\rho_t^{\bar f} \equiv \bar \rho_t$,
	where we recall that $\rho_t^{\bar f}$ is the solution to the continuity equation (\ref{eqn_CE}) with velocity field $\bar f$.
	We can bound
	\begin{align}
		\notag \frac{\partial}{\partial t}A^{\hat f}_x(t) =&\ \|\delta_t^{\hat f}\circ X_t^{\hat f}(\vx)\|^2 = \|\delta_t^{\hat f}\circ X_t^{\hat f}(\vx) - \delta_t^{\bar f}\circ X_t^{\bar f}(\vx)\|^2 \\
		\notag \leq&\ 4\|\left(\hat f_t\circ X_t^{\hat f} - \bar f_t\circ X_t^{\bar f}\right)(\vx)\|^2 + 4\|\left(\nabla V\circ X_t^{\hat f} - \nabla V\circ X_t^{\bar f}\right)(\vx)\|^2 \\
		\notag &\ + 4\|\left((K\ast \rho_t^{\hat f})\circ X_t^{\hat f} - (K\ast \bar \rho_t)\circ X_t^{\bar f}\right)(\vx)\|^2 + 4\nu^2\|\left(\nabla \log \rho_t^{\hat f}\circ X_t^{\hat f} - \nabla \log \bar \rho_t\circ X_t^{\bar f}\right)(\vx)\|^2\\
		= &\ \textcircled{1} + \textcircled{2} + \textcircled{3} + \textcircled{4}. \label{eqn_appendix_decomposition_A}
	\end{align}
	We will bound each term on the R.H.S. individually.
	The following lemmas will be useful:
	\begin{lemma} \label{lemma_appendix_Lipschitz_X_t_f}
		For two Lipschitz continuous velocity field $f_1, f_2 \in \gC^1(\X)$, we have for any $t\in[0, T]$
		\[
		\|X_t^{f_1}(\vx) - X_t^{f_2}(\vx)\|^2 \leq A_1(T)\|f_1 - f_2\|^2_{\gL^\infty(\X)}.
		\]
	\end{lemma}
    \begin{proof}
        Denote $\vx^i(t) = X_t^{f_i}(\vx_0)$ for $i = 1, 2$.
        \begin{align*}
            \frac{\ud }{\ud t}\|\vx^1(t) - \vx^2(t)\|^2 \leq&\ \|\vx^1(t) - \vx^2(t)\|^2 + \|f_1(t, \vx^1(t)) - f_2(t, \vx^2(t))\|^2\\
            \leq &\ C \|\vx^1(t) - \vx^2(t)\|^2 + \|f_1(t, \vx^2(t)) - f_2(t, \vx^2(t))\|^2 \\
            \leq &\ C \|\vx^1(t) - \vx^2(t)\|^2 + \|f_1 - f_2\|^2_{\gL^\infty(\X)}.
        \end{align*}
        Using the Gr\"onwall's inequality, we have the result.
    \end{proof}
	\begin{lemma} \label{lemma_appendix_Lipschitz_X_t}
		Suppose that $f\in \gC^1$ is Lipschitz continuous. We have that $X_t^f$ is an $A_2(T)$-Lipschitz continuous map. 
		For $f \in \gL^\infty(\X)$, we have $\|X_t^f(\vx)\|^2 \leq \|x\|^2 + t\|f\|^2_{\gL^\infty}$.
	\end{lemma}
    \begin{proof}
        Denote $\vx^i(t) = X_t^{f}(\vx^i_0)$ for $i = 1, 2$.
        \begin{align*}
            \frac{\ud }{\ud t}\|\vx^1(t) - \vx^2(t)\|^2 \leq&\ \|\vx^1(t) - \vx^2(t)\|^2 + \|f(t, \vx^1(t)) - f(t, \vx^2(t))\|^2\\
            \leq &\ C\|\vx^1(t) - \vx^2(t)\|^2.
        \end{align*}
        Using the Gr\"onwall's inequality, we have that $X_t^f$ is Lipschitz continuous.
    \end{proof}
	\begin{lemma}
		For $f \in \gC^2(\X)$, suppose that $\nabla (\udiv  f)\in \gL^\infty(\X)$ and $\gJ_f \in \gL^\infty(\X)$. 
        Further suppose that the initial distribution $\bar \rho_0$ satisfies $\nabla \log \bar \rho_0 \in \gL^\infty(\X)$.
        We have that $\nabla \log \rho_t^f\circ X_t^f \in \gL^\infty(\X)$.
	\end{lemma}
    \begin{proof}
        Denote $\vx(t) = X_t^{f}(\vx_0)$. From \eqref{eqn_dynamics_of_score}, we have
        \begin{equation}
           \frac{\ud }{\ud t} \|\nabla \log \rho_t^f(\vx(t))\|^2 \leq C (1+\|\nabla \log \rho_t^f(\vx(t))\|^2).
        \end{equation}
        Using the Gr\"onwall's inequality, we have $\|\nabla \log \rho_t^f(\vx(t))\|^2 < \infty$ for $\vx_0 \in \X$.
    \end{proof}
	\paragraph{Bounding \textcircled{1} in \eqref{eqn_appendix_decomposition_A}}
	We have
	\begin{align*}
		\notag &\ \|\left(\hat f_t\circ X_t^{\hat f} - \bar f_t\circ X_t^{\bar f}\right)(\vx)\| \\
		\leq &\ \|\left(\hat f_t\circ X_t^{\hat f} - \bar f_t\circ X_t^{\hat f}\right)(\vx)\| + \|\left(\bar f_t\circ X_t^{\hat f} - \bar f_t\circ X_t^{\bar f}\right)(\vx)\| \\
		\leq &\ \epsilon + \epsilon \cdot \| X_t^{\hat f}(\vx) - X_t^{\bar f}(\vx)\| \leq C_1(T) \epsilon.
	\end{align*}
	\paragraph{Bounding \textcircled{2} in \eqref{eqn_appendix_decomposition_A}}
	We have from Assumption \ref{ass_appendix_approximation} and Lemma \ref{lemma_appendix_Lipschitz_X_t_f}
	\begin{align*}
		\|\left(\nabla V\circ X_t^{\hat f} - \nabla V\circ X_t^{\bar f}\right)(\vx)\| \leq C_2(T) \epsilon.
	\end{align*}
	\paragraph{Bounding \textcircled{3} in \eqref{eqn_appendix_decomposition_A}}
	Bounding \textcircled{3} in \eqref{eqn_appendix_decomposition_A} requires a more sophisticated analysis which is the major technical challenge of this proof. 
	For the simplicity of notations, for a fixed $x$ and $y$, denote $\hat \vx(t) = X_t^{\hat f}(\vx)$, $\bar \vx(t) =  X_t^{\bar f}(\vx)$, $\hat \vy(t) = X_t^{\hat f}(\vy)$, $\bar \vy(t) =  X_t^{\bar f}(\vy)$. For any $\epsilon'$ which is to be determined later, we have
	\begin{align}
		\notag &\ \|\left((K\ast \rho_t^{\hat f})\circ X_t^{\hat f} - (K\ast \bar \rho_t)\circ X_t^{\bar f}\right)(\vx)\|^2
		= \| \int_\X K(\hat \vx(t) - \hat \vy(t)) - K(\bar \vx(t) - \bar \vy(t)) \ud \bar \rho_0(\vy)\|^2\\
		\tag{\textcircled{A}}\leq &\ 2\| \int_{\|\hat \vx(t) - \hat \vy(t)\|\leq \epsilon'} K(\hat \vx(t) - \hat \vy(t)) - K(\bar \vx(t) - \bar \vy(t)) \ud \bar \rho_0(\vy)\|^2\\
		\tag{\textcircled{B}} &\ + 2\| \int_{A_2(T)L\geq \|\hat \vx(t) - \hat \vy(t)\|\geq \epsilon'} K(\hat \vx(t) - \hat \vy(t)) - K(\bar \vx(t) - \bar \vy(t)) \ud \bar \rho_0(\vy)\|^2.
	\end{align}
	Note that the upper bound on $\|\hat \vx(t) - \hat \vy(t)\|$ in \textcircled{B} comes from Lemma \ref{lemma_appendix_Lipschitz_X_t} and the facts that $\X = \Pi^d$ is bounded with size $L$.
	To bound \textcircled{A}, we have
	\begin{align*}
		&\ \| \int_{\|\hat \vx(t) - \hat \vy(t)\|\leq \epsilon'} K(\hat \vx(t) - \hat \vy(t)) - K(\bar \vx(t) - \bar \vy(t)) \ud \bar \rho_0(\vy)\| \\
		\leq &\  \int_{\|\hat \vx(t) - \hat \vy(t)\|\leq \epsilon'} \|K(\hat \vx(t) - \hat \vy(t))\| + \|K(\bar \vx(t) - \bar \vy(t))\| \ud \bar \rho_0(\vy)\\
		= &\ \int_{\|\hat \vx(t) - \hat \vy(t)\|\leq \epsilon'} \frac{1}{\|\hat \vx(t) - \hat \vy(t)\|^{d-1}} + \frac{1}{\|\bar \vx(t) - \bar \vy(t)\|^{d-1}} \ud \bar \rho_0(\vy) = \textcircled{C} + \textcircled{D}.
	\end{align*}
	We can bound \textcircled{C} by
	\begin{align}
		&\ \int_{\|\hat \vx(t) - \hat \vy(t)\|\leq \epsilon'} \frac{1}{\|\hat \vx(t) - \hat \vy(t)\|^{d-1}} \ud \bar \rho_0(\vy) = \int_{\|\hat \vx(t) - y\|\leq \epsilon'} \frac{1}{\|\hat \vx(t) - y\|^{d-1}} \ud \rho^{\hat f}_t(\vy) \leq \|\rho^{\hat f}_t\|_{\infty}\cdot \epsilon',
	\end{align}
	where in the above inequality we remove the singular term by using transforming to the polar coordinate system.
	To bound \textcircled{D}, we pick $\epsilon' = dA_1(T)\epsilon$, so that Lemma \ref{lemma_appendix_Lipschitz_X_t_f} implies 
	\begin{equation*}
		\{ y\in\X | \|\hat \vx(t) - \hat \vy(t)\|\leq \epsilon'\} \subseteq \{y\in\X | \|\bar \vx(t) - \bar \vy(t)\|\leq \frac{d+2}{d}\epsilon'\},
	\end{equation*}
	and consequently
	\begin{equation}
		\int_{\|\hat \vx(t) - \hat \vy(t)\|\leq \epsilon'} \frac{1}{\|\bar \vx(t) - \bar \vy(t)\|^{d-1}} \ud \bar \rho_0(\vy) \leq \int_{\|\bar \vx(t) - \bar \vy(t)\|\leq 2\epsilon'} \frac{1}{\|\bar \vx(t) - \bar \vy(t)\|^{d-1}} \ud \bar \rho_0(\vy) \leq \frac{d+2}{d} \|\rho^{\hat f}_t\|_{\infty}\cdot \epsilon'.
	\end{equation}
	To bound \textcircled{B}, note that
	\begin{equation}
		\nabla K(\vx) = \frac{1}{\|x\|^{d+2}}(\|x\|^2 I - d\cdot x\otimes x) \Rightarrow \|\nabla K(\vx)\|\leq \frac{d}{\|x\|^{d}}.
	\end{equation}
	Denote $\vz(t) = \min(\|\hat \vx(t) - \hat \vy(t)\|, \|\bar \vx(t) - \bar \vy(t)\|)$.
	Recall the choice of $\epsilon' = dA_1(T)\epsilon$.
	Using Lemma \ref{lemma_appendix_Lipschitz_X_t_f}, we have that 
	\begin{equation*}
		\|\vz(t)\| \geq \frac{d-2}{d} \|\hat \vx(t) - \hat \vy(t)\|.
	\end{equation*}


	Using the triangle inequality, we have
	\begin{align*}
		&\ \| \int_{A_2(T)L \geq\|\hat \vx(t) - \hat \vy(t)\|\geq \epsilon'} K(\hat \vx(t) - \hat \vy(t)) - K(\bar \vx(t) - \bar \vy(t)) \ud \bar \rho_0(\vy)\| \\
		\leq &\ \int_{A_2(T)L \geq\|\hat \vx(t) - \hat \vy(t)\|\geq \epsilon'} \|K(\hat \vx(t) - \hat \vy(t)) - K(\bar \vx(t) - \bar \vy(t))\| \ud \bar \rho_0(\vy)\\
		\leq &\ 2\epsilon' d\int_{A_2(T)L \geq\|\hat \vx(t) - \hat \vy(t)\|\geq \epsilon'} \frac{1}{\|\vz(t)\|^{d}} \ud \bar \rho_0(\vy) \\
		\leq &\ 2\epsilon' d\int_{A_2(T)L \geq\|\hat \vx(t) - \hat \vy(t)\|\geq \epsilon'} (\frac{d}{d-2})^d\frac{1}{\|\hat \vx(t) - \hat \vy(t)\|^{d}} \ud \bar \rho_0(\vy) \\
		\leq &\ 2e\epsilon' d \|\rho^{\hat f}_t\|_{\infty} \int_{A_2(T)L \geq \|\hat \vx(t) - y\|\geq .5\epsilon'} \frac{1}{\|y\|^d} \ud y \\
		=&\ 2e\epsilon' d \|\rho^{\hat f}_t\|_{\infty} \ln (A_2(T)L/\epsilon').
	\end{align*}

	Combining the bounds of \textcircled{A} and \textcircled{B}, we have that 
	\begin{equation}
		\textcircled{3} \leq C_3(T) (\epsilon \ln\frac{1}{\epsilon})^2.
	\end{equation}
	\paragraph{Bounding \textcircled{4} in \eqref{eqn_appendix_decomposition_A}}
	Denote $\hat \vx(t) = X_t^{\hat f}(\vx)$ and $\bar \vx(t) =  X_t^{\bar f}(\vx)$.
	Define
	\begin{equation}
		B_\vx(t) \defi \|\left(\nabla \log \rho_t^{\hat f}\circ X_t^{\hat f} - \nabla \log \bar \rho_t\circ X_t^{\bar f}\right)(\vx)\|^2 = \|\nabla \log \rho_t^{\hat f}(\hat{\vx}(t)) - \nabla \log \bar \rho_t(\bar{\vx}(t))\|^2.
	\end{equation}
	Computing its dynamics
	\begin{equation}
		\frac{\ud }{\ud t} B_\vx(t) \leq B_\vx(t) + \|\frac{\ud }{\ud t}\left(\nabla \log \rho_t^{\hat f}(\hat{\vx}(t)) - \nabla \log \bar \rho_t(\bar{\vx}(t))\right)\|^2
	\end{equation}
	Recall \eqref{eqn_dynamics_of_score}. We have that 
	\begin{equation}
		\frac{\ud }{\ud t} \nabla \log \rho_t^{\hat f}(\hat \vx(t)) = - \nabla  \left(\nabla \cdot \hat f_{t}(\hat \vx(t))\right) -   \left(\gJ_{\hat f_{t}}(\hat \vx(t))\right)^\top \nabla \log \rho_{t}^{\hat f}(\hat \vx(t)),
	\end{equation}
	\begin{equation}
		\frac{\ud }{\ud t} \nabla \log \rho_t^{\bar  f}(\bar  \vx(t)) = - \nabla  \left(\nabla \cdot \bar  f_{t}(\bar  \vx(t))\right) -   \left(\gJ_{\bar  f_{t}}(\bar  \vx(t))\right)^\top \nabla \log  \rho_{t}^{\bar  f}(\bar  \vx(t)),
	\end{equation}
	and hence
	\begin{align*}
		&\ \|\frac{\ud }{\ud t}\left(\nabla \log \rho_t^{\hat f}(\hat{\vx}(t)) - \nabla \log \bar \rho_t(\hat{\vx}(t))\right)\|^2 \\
		\leq &\ 2\|\nabla  \left(\nabla \cdot \hat f_{t}(\hat \vx(t))\right) - \nabla  \left(\nabla \cdot \bar  f_{t}(\bar  \vx(t))\right)\|^2 
		 + 2\|\left(\gJ_{\hat f_{t}}(\hat \vx(t))\right)^\top \nabla \log \rho_{t}^{\hat f}(\hat \vx(t)) - \left(\gJ_{\bar  f_{t}}(\bar  \vx(t))\right)^\top \nabla \log  \rho_{t}^{\bar  f}(\bar  \vx(t))\|^2 \\
		= &\ \textcircled{E} + \textcircled{F}.
	\end{align*}
	We now bound these two terms individually. To bound \textcircled{E},
	\begin{align*}
		&\ \|\nabla  \left(\nabla \cdot \hat f_{t}(\hat \vx(t))\right) - \nabla  \left(\nabla \cdot \bar  f_{t}(\bar  \vx(t))\right)\| \\
		\leq &\ \|\nabla  \left(\nabla \cdot \hat f_{t}(\hat \vx(t))\right) - \nabla  \left(\nabla \cdot \hat  f_{t}(\bar  \vx(t))\right)\| + \|\nabla  \left(\nabla \cdot \hat f_{t}(\bar \vx(t))\right) - \nabla  \left(\nabla \cdot \bar  f_{t}(\bar  \vx(t))\right)\| \leq \epsilon + LA_1(T)\epsilon.
	\end{align*}
	To bound \textcircled{F}
	\begin{align*}
		&\ \|\left(\gJ_{\hat f_{t}}(\hat \vx(t))\right)^\top \nabla \log \rho_{t}^{\hat f}(\hat \vx(t)) - \left(\gJ_{\bar  f_{t}}(\bar  \vx(t))\right)^\top \nabla \log  \rho_{t}^{\bar  f}(\bar  \vx(t))\|^2\\
		\leq &\ 2\|\left(\left(\gJ_{\hat f_{t}}(\hat \vx(t))\right)^\top - \left(\gJ_{\bar  f_{t}}(\bar  \vx(t))\right)^\top\right) \nabla \log  \rho_{t}^{\hat  f}(\hat  \vx(t))\|^2 + 2\|\left(\gJ_{\bar f_{t}}(\bar \vx(t))\right)^\top \left(\nabla \log \rho_{t}^{\hat f}(\hat \vx(t)) - \nabla \log  \rho_{t}^{\bar  f}(\bar  \vx(t))\right)\|^2\\
		\leq &\ 2(1 + LA_1(T))^2\epsilon^2 + 2L^2 B_x(t).
	\end{align*}
	Consequently, using Gr\"onwall's inequality, we have that
	\begin{equation}
		\textcircled{4} \leq C_4(T)\epsilon^2.
	\end{equation}

	Combining all the estimations for \textcircled{1} to \textcircled{4}, we have that
	\begin{equation}
		R(\hat f) \leq C(T) \epsilon^2 (\ln \frac{1}{\epsilon})^2,
	\end{equation}
	for some constant $C(T)$ independent of $\epsilon.$
\end{proof}

\clearpage
\section{Discussion on the Unbounded Case} \label{appendix_unbounded}
In Section \ref{section_analysis}, we considered the torus case, i.e. $\X$ is a $d$-dimensional box with size $L$ with a periodic boundary condition. In this section, we consider the unbounded case, i.e. $\X = \sR^d$. 
There are two major differences: 
\begin{enumerate}[leftmargin=*]
    \item The first difference is that when $\X = \sR^d$, we would obtain an additional integral-of-divergence term from the operation of integration by parts. When $\X$ is a torus, using Gauss's divergence theorem and the periodic boundary condition, this term immediately vanishes, which simplifies the analysis. In contrast, for the unbounded case, we need to handle this term by assuming some additional regularity conditions.
    \item The second difference is that for the torus, it is reasonable to assume that the initial distribution $\bar \rho_0$ is fully supported, which is equivalent to the existence of some constant $c > 0$ such that $\bar \rho_0(\vx) \geq c$ for all $\vx\in\X$. Such an assumption will allow us to propagate the regularity of the initial distribution $\bar \rho_0$ to the solution at time $t$, i.e. $\bar \rho_t$.
    In contrast, for the unbounded case, such an assumption clearly does not hold since otherwise $\bar \rho_0$ would not be integrable. Consequently, we can no long propagate the regularity of the initial distribution and hence we need to directly make regularity assumptions on $\bar \rho_t$.
\end{enumerate}
In the following, we will focus on addressing the first point and provide sufficient conditions such that Lemmas \ref{TimeEvolKLMV} and  \ref{ModuEnergyEvo} can be recovered even in the unbounded case.
To elaborate a bit on the second point, the theorems that are derived in the main body of the submission remain valid under the regularity assumptions given therein. However, unlike the torus case, it is difficult to establish these regularity results for the unbounded case by assuming the regularity of the initial distribution $\bar \rho_0$.
\begin{lemma}[Analogy of Lemma  \ref{TimeEvolKLMV} in the unbounded case] \label{lemma_TimeEvolKLMV}
    Given the hypothesis velocity field $f=f(t, x) \in C^1_{t, x}$. Assume that $(\rho_t^f)_{t \in [0, T]}$ and $(\bar \rho_t)_{t \in [0, T]}$ are classical solutions to equation (\ref{eqn_CE}) and equation (\ref{eqn_MVE_CE}) respectively.  It holds that (recall the definition of $\delta_t$ in equation (\ref{eqn_perturbation})) 
    \begin{align*}
        \frac{\ud }{\ud t} \int_{\mathcal{X}} \rho^f_t \log \frac{\rho^f_t }{\bar \rho_t} =&\ - \nu \int_{ \X}\rho^f_t |\nabla \log \frac{\rho^f_t}{\bar \rho_t}|^2 +  \int_{\X} \rho^f_t K * (\rho^f_t - \bar \rho_t ) \cdot \nabla  \log  \frac{\rho^f_t }{\bar \rho_t} \\
        &\ +  \int_{\X} \rho^f_t \delta_t \cdot \nabla \log \frac{\rho^f_t }{\bar \rho_t } - \int \udiv \Big(\rho^f_t  (f_t  \log \frac{\rho^f_t}{\bar \rho_t} - \bar f_t)\Big).
    \end{align*}
	where $\X$ is the tours $\Pi^d$. All the integrands are evaluated at $\vx$. 
\end{lemma}
\begin{proof} 
	Recall the McKean-Vlasov  \eqref{eqn_MVE_CE} and the continuity  \eqref{eqn_CE_as_MVE}. For simplicity, we write that $\rho_t = \rho_t^f$ and $\bar f_t = \gA[\bar \rho_t]$. Then 
    \begin{align*}
        \frac{\ud }{\ud t } \int \rho_t \log \frac{\rho_t }{\bar \rho_t } =&\ - \int  \udiv \Big(    \rho_t  f_t \Big) \log \frac{\rho_t}{\bar \rho_t}  +  \int \frac{\rho_t}{\bar \rho_t}  \udiv \Big(   \bar \rho_t \bar f_t\Big)\\
        =&\ \int  \rho_t  f_t \nabla \log \frac{\rho_t}{\bar \rho_t}  -  \int \nabla \frac{\rho_t}{\bar \rho_t}  \bar \rho_t \bar f_t - \int \udiv \Big(    \rho_t  (f_t  \log \frac{\rho_t}{\bar \rho_t} - \bar f_t)\Big).
    \end{align*}
    We handle the first two terms on the R.H.S. just like the torus case and we can have the result.
\end{proof}

\begin{lemma}[Analogy of Lemma  \ref{ModuEnergyEvo} in the unbounded case] \label{lemma_ModuEnergyEvo}
Under the same assumptions as in Lemma \ref{TimeEvolKLMV}, given the diffusion coefficient $\nu \geq 0$, it holds that (recall the definition of $\delta_t$ in equation (\ref{eqn_perturbation})) 
    \begin{align*}
        \frac{\ud }{\ud t } F(\rho_t^f, \bar \rho_t)  =&\  - \int_{\mathcal{X}} \rho_t^f \|K *(\rho_t^f - \bar \rho_t)\|^2 - \int_{\mathcal{X}} \rho_t^f \, \delta_t \cdot K * (\rho_t^f - \bar \rho_t ) + \nu \int_{\mathcal{X}} \rho^f_t \, K * (\rho_t^f - \bar \rho_t )\cdot \nabla \log \frac{\rho_t^f}{\bar \rho_t} \\
		&\  - \frac{1}{2} \int_{\mathcal{X}^2} K(x-y) \cdot \Big( \mathcal{A}[\bar \rho_t](x) - \mathcal{A}[\bar \rho_t](y) \Big) \ud (\rho_t^f - \bar \rho_t )^{\otimes 2 }(x, y) \\
        &\ - \int \udiv \Big\{ g * (\rho^f_t - \bar \rho_t )(x)( \rho^f_t(x) f_t(x)   - \bar \rho_t(x) \bar f_t(x))\Big\} \ud x
    \end{align*}
	where we recall that the operator $\gA$ is defined in equation (\ref{eqn_operator_A}).
\end{lemma}
\begin{proof}
Recall that $K= - \nabla g$.  For simplicity, we write that $\rho_t = \rho_t^f$. Then 
	\[
	\begin{split} 
		&\ \frac{\ud }{\ud t } F(\rho_t, \bar \rho_t) = \frac{\ud }{\ud t } \frac{1 }{2} \int_{\mathcal{X}^2} g (x- y) \ud (\rho_t - \bar \rho_t)^{\otimes 2 } (x, y)  \\
		=&\ \int_\mathcal{X} g *(\rho_t - \bar \rho_t) (x) \big(\partial_t \rho_t (x) - \partial_t \bar \rho_t (x)  \Big) \ud x  \\
		=&\ - \int g * (\rho_t - \bar \rho_t ) (x) \udiv \Big\{  \rho_t(x) f_t(x)   - \bar \rho_t(x) \bar f_t(x)\Big\} \ud x \\
        =&\ \int \nabla g * (\rho_t - \bar \rho_t ) (x) \Big\{  \rho_t(x) f_t(x)   - \bar \rho_t(x) \bar f_t(x)\Big\} \ud x \\
        &\ - \int \udiv \Big\{ g * (\rho_t - \bar \rho_t )(x)( \rho_t(x) f_t(x)   - \bar \rho_t(x) \bar f_t(x))\Big\} \ud x
	\end{split} 
	\]
    We handle the first term on the R.H.S. just like the torus case and we can have the result.
\end{proof}

\subsection{Handling the Integral of the Divergence}
Given a vector field, the following lemma provides a sufficient condition for the volume integral of its divergence over $\X$ to be zero.
The idea is to construct a sequence of approximations to the integral of interest, each of which involves integration over a compact set. Consequently, Gauss's divergence theorem can be applied. We then utilize the dominant convergence theorem to exchange the order of the limit and integral.
\begin{lemma} \label{lemma_zero_divergence_integral_general}
    For a vector function $g: \sR^d \rightarrow\sR^d$ which satisfies 
    \begin{equation} \label{eqn_integrability_g}
        \int_{\sR^d} \ud\vx\ |\udiv\ g (\vx)| < \infty\quad \text{and}\quad \int_{\sR^d} \ud\vx\ \|g (\vx)\| <\infty,
    \end{equation}
    we have
    \begin{equation}
        \int_{\sR^d}\ud\vx\ \udiv\ g (\vx) = 0.
    \end{equation}
\end{lemma}
\begin{proof}
    Choose a cut-off function, indexed by $r > 1$, satisfying
    \begin{equation}
        \Phi_r(\vx) = \begin{cases}
            1, \quad& \text{if} \quad \|\vx\|\leq r, \\
            \frac{1}{2}(1+\cos(\pi \|\vx\| / r - 1)), & \text{if} \quad r<\|\vx\|\leq 2r,\\
            0, &\text{if} \quad 2r<\|\vx\|.
        \end{cases}
    \end{equation}
    We have $\|\nabla \Phi_r\|_{\gL^\infty} = O({1}/{r})$.
    Using the chain rule of divergence, we have that
    \begin{equation}
        \udiv_\vx(g \cdot \Phi_r) = \udiv_\vx(g)\cdot\Phi_r + g \cdot \nabla \Phi_r.
    \end{equation}
    We have $\int_{\sR^d}\ud \vx \udiv_\vx(g \Phi_r) (\vx) = 0$ for all $r$ and $\vx$, by noting $g \Phi_r(\vx) = 0$ for $\|\vx\| > 2r$ and using Gauss's divergence theorem on the $\vx$ variable.
    Using conditions (\ref{eqn_integrability_g}) and the dominated convergence theorem, we have
    \begin{align*}
        0 =&\ \lim_{r\rightarrow\infty} \int_{\X}\ud \vx\ [\udiv_\vx(g)\cdot\Phi_r] (\vx) + \lim_{r\rightarrow\infty} \int_{\X}\ud \vx\ [g \cdot \nabla \Phi_r] (\vx)\\
        =&\   \int_{\X}\ud \vx\ \lim_{r\rightarrow\infty} [\udiv_\vx(g)\cdot\Phi_r] (\vx) +  \int_{\X}\ud \vx\ \lim_{r\rightarrow\infty}[g \cdot \nabla \Phi_r] (\vx) \\
        =&\  \int_{\X}\ud \vx\ \udiv_\vx(g) (\vx),
    \end{align*}
    where in the last equality, we use $g \cdot \nabla \Phi_r (\vx) \leq \|g(\vx)\| \|\nabla \Phi_r (\vx)\| \rightarrow 0$ as $r\rightarrow\infty$.
\end{proof}
We now show that the divergence integrals in Lemmas \ref{lemma_TimeEvolKLMV} and \ref{lemma_ModuEnergyEvo} satisfy the requirements (\ref{eqn_integrability_g}), under the following regularity assumptions on the hypothesis velocity field $f$, initial distribution $\bar \rho_0$, and the ground truth solution $\bar \rho$.
\begin{assumption}\label{ass_regularity_force_field}
    $f \in \Lip(\X)$ and there exists some constant $L$, such that for all $t \in [0, T]$ and $\vx \in \X$ $\|[\nabla (\udiv f)](t, \vx)\| \leq L$.
\end{assumption}
\begin{assumption}\label{ass_regularity_initial_distribution}
    The initial distribution $\bar \rho_0$ satisfies
    \begin{equation}
        \int_{\X}\ud \vx\ \bar \rho_0(\vx) (|\log \bar \rho_0(\vx)|+1)(\|\vx\|+1)^{\alpha+1}(\|\nabla\log\bar \rho_0(\vx)\|+1) <\infty
    \end{equation}
\end{assumption}
\begin{assumption}\label{ass_regularity_ground_truth}
    Suppose that the ground truth $\bar \rho_t \in \gL^\infty$ is sufficiently regular such that
    \begin{equation}
        |\log \bar \rho_t(\vx)| + \|\nabla \log \bar \rho_t(\vx)\| \leq L(1 + \|\vx\|)^\alpha\ \text{and}\ \|\bar f_t(\vx)\| + |\udiv \bar f_t(\vx)|\leq L(1+\|\vx\|)^\alpha
    \end{equation}
    holds for all $\vx \in\X$ and $t \in [0, T]$ with some constant $\alpha$ and $L$. Here we denote $\bar f_t = \gA [\rho_t]$.
\end{assumption}
The following estimations of regularity will be helpful. The proof is deferred to the end of this section.
\begin{lemma}\label{lemma_regularity_estimation}
    Under Assumption \ref{ass_regularity_force_field}, we have the following estimations
    \begin{align*}
        \|\vx_t\|^2 \leq&\ \exp(t(1+2L^2))(\|\vx_0\|^2 + 1)\\
        |\log \rho_t^f(\vx_t)| \leq&\ |\log \bar \rho_0(\vx_0)| + L t\\
        \|\nabla \log \rho_t^f(\vx_t)\|^2 \leq&\ \exp(t(1+2L^2))(\|\nabla \log  \bar \rho_0(\vx_0)\|^2 + 1).
    \end{align*}
\end{lemma}
We now show that the integrals of the divergence in Lemmas \ref{lemma_zero_divergence_integral_KL} and \ref{lemma_zero_divergence_integral_modulated_energy} are zero.
\begin{lemma} \label{lemma_zero_divergence_integral_KL}
    Under Assumptions \ref{ass_regularity_force_field} to \ref{ass_regularity_ground_truth}, we have
    \begin{equation}\label{eqn_divergence_integral_KL}
        \int_{\X} \udiv \Big(\rho_t  (f_t  \log \frac{\rho^f_t}{\bar \rho_t} - \bar f_t)\Big) = 0.
    \end{equation}
\end{lemma}
\begin{proof}
    To establish Lemma \ref{lemma_zero_divergence_integral_KL}, we need to show that all the terms inside the divergence of \eqref{eqn_divergence_integral_KL} satisfy the integrability requirements (\ref{eqn_integrability_g}) in Lemma \ref{lemma_zero_divergence_integral_general}, which are handled one by one in the following.

    \begin{itemize}[leftmargin=*]
        \item We handle the term $\rho_t^f \log \rho_t^f f_t$.
        \begin{itemize}
            \item To show that $\int_{\X}\ud \vx\ \| \rho_t^f \log \rho_t^f f_t (\vx)\| <\infty$
            \begin{align*}
                &\ \int_{\X}\ud \vx\ \| [\rho_t^f \log \rho_t^f f_t] (\vx)\| = \int_{\X}\ud \vx\ \rho_t^f(\vx) \| [ \log \rho_t^f f_t] (\vx)\|\\
                =&\ \int_{\X}\ud \vx\ \bar \rho_0(\vx_0) \cdot|\log \rho_t^f(\vx_t)|\cdot\|f_t(\vx_t)\| \\
                \leq &\ \int_{\X}\ud \vx\ \bar \rho_0(\vx_0) \cdot(|\log \bar \rho_0(\vx_0)| + Lt)\cdot\exp(t(1+2L^2))(\|\vx_0\| + 1) < \infty.
            \end{align*}
    
            \item To show that $\int_{\X}\ud \vx\ | \udiv \left( \rho_t^f \log \rho_t^f f_t\right) (\vx)| <\infty$
            \begin{align*}
                &\ \udiv \left( \rho_t^f \log \rho_t^f f_t\right) = \udiv f_t \cdot \rho_t^f \log \rho_t^f + f_t \cdot \nabla (\rho_t^f \log \rho_t^f)\\
                =&\ \udiv f_t \cdot \rho_t^f\cdot \log \rho_t^f + (f_t \cdot \nabla \rho_t^f) \cdot \log \rho_t^f + (f_t \cdot \nabla\log \rho_t^f) \cdot\rho_t^f\\
                =&\ \rho_t^f\left(\udiv f_t \cdot \log \rho_t^f + (f_t \cdot \nabla \log \rho_t^f) \cdot (1+\log \rho_t^f)\right)
            \end{align*}
            We now bound
            \begin{align*}
                &\ \int_{\X}\ud \vx\ \rho_t^f(\vx) | [\udiv f_t \cdot \log \rho_t^f] (\vx)|\\
                =&\ \int_{\X}\ud \vx\ \bar \rho_0(\vx_0) | [\udiv f_t \cdot \log \rho_t^f] (\vx_t)| \leq \int_{\X}\ud \vx\ \bar \rho_0(\vx_0)\cdot L \cdot (tL + |\log \bar \rho_0(\vx_0)|) <\infty.
            \end{align*}
            and 
            \begin{align*}
                &\ \int_{\X}\ud \vx\ \rho_t^f(\vx) | [(f_t \cdot \nabla \log \rho_t^f) \cdot (1+\log \rho_t^f)] (\vx)| \\
                = &\ \int_{\X}\ud \vx\ \bar \rho_0(\vx_0) | [(f_t \cdot \nabla \log \rho_t^f) \cdot (1+\log \rho_t^f)] (\vx_t)| \\
                \leq &\ \int_{\X}\ud \vx\ \bar \rho_0(\vx_0) L  (1+\|\vx_0\|) \exp(t(1+2L^2))(\|\nabla\log\bar \rho_0(\vx_0)\|+1)(1+Lt+|\log \bar \rho_0(\vx_0)|) <\infty.
            \end{align*}
        \end{itemize}
        \item We handle the term $\rho_t^f \log \bar \rho_t f_t$.
        \begin{itemize}
            \item To show that $\int_{\X}\ud \vx\ \| \rho_t^f \log \bar \rho_tf_t (\vx)\| <\infty$
                \begin{align*}
                    &\ \int_{\X}\ud \vx\ \rho_t^f\|\log \bar \rho_tf_t (\vx)\| = \int_{\X}\ud \vx\ \bar \rho_0\|[\log \bar \rho_tf_t] (\vx_t)\| \\
                    \leq&\ \int_{\X}\ud \vx\ \bar \rho_0(\vx_0)|\log \bar \rho_t(\vx_t)|\|f_t(\vx_t)\| \leq \int_{\X}\ud \vx\ \bar \rho_0(\vx_0)L^2(1+\|\vx_t\|)^{\alpha+1}<\infty.
                \end{align*}
            \item To show that $\int_{\X}\ud \vx\ | \udiv \left( \rho_t^f \log \bar \rho_tf_t\right) (\vx)| <\infty$
            \begin{align*}
                &\ \udiv \left( \rho_t^f \log \bar \rho_tf_t\right) = \udiv f_t \cdot \rho_t^f \log \bar \rho_t+ f_t \cdot \nabla (\rho_t^f \log \rho_t)\\
                =&\ \udiv f_t \cdot \rho_t^f\cdot \log \bar \rho_t+ (f_t \cdot \nabla \rho_t^f) \cdot \log \bar \rho_t+ (f_t \cdot \nabla\log \rho_t) \cdot\rho_t^f\\
                =&\ \rho_t^f\left(\udiv f_t \cdot \log \bar \rho_t+ (f_t \cdot \nabla \log \rho_t^f) \log \bar \rho_t+ f_t \cdot \nabla\log \rho_t\right)
            \end{align*}
            We now bound 
            \begin{align*}
                &\ \int_{\X}\ud \vx\ \rho_t^f(\vx)|\udiv f_t(\vx)| \cdot |\log \bar \rho_t(\vx)| = \int_{\X}\ud \vx\ \bar \rho_0(\vx_0) |\udiv f_t(\vx_t)|\cdot |\log \bar \rho_t(\vx_t)|\\
                \leq&\ \int_{\X}\ud \vx\ \bar \rho_0(\vx_0) L(1+\|\vx_t\|)(1+\|\vx_t\|)^\alpha < \infty.
            \end{align*}
            \begin{align*}
                &\ \int_{\X}\ud \vx\ \rho_t^f(\vx)| [(f_t \cdot \nabla \log \rho_t^f) \log \rho_t](\vx)| = \int_{\X}\ud \vx\ \bar \rho_0(\vx_0)| [(f_t \cdot \nabla \log \rho_t^f) \log \rho_t](\vx_t)|\\
                \leq &\ \int_{\X}\ud \vx\ \bar \rho_0(\vx_0)\|f_t(\vx_t)\|\| \nabla \log \rho_t^f(\vx_t)\| |\log \bar \rho_t(\vx_t)| \\
                \leq&\ \int_{\X}\ud \vx\ \exp(t(1+2L^2))(\|\nabla \log \bar \rho_0(\vx_0)\| + 1)L(1+\|\vx_t\|)(1+\|\vx_t\|)^\alpha < \infty.
            \end{align*}
            \begin{align*}
                &\ \int_{\X}\ud \vx\ \rho_t^f(\vx)| [f_t \cdot \nabla\log \rho_t](\vx)| = \int_{\X}\ud \vx\ \bar \rho_0(\vx_0)\|f_t(\vx_t)\|\|\nabla\log \bar \rho_t(\vx_t)\| \\
                \leq &\ \int_{\X}\ud \vx\ \bar \rho_0(\vx_0)\|f_t(\vx_t)\|\|\nabla\log \bar \rho_t(\vx_t)\| \leq \int_{\X}\ud \vx\ \bar \rho_0(\vx_0)L (1+\|\vx_t\|)(1+\|\vx_t\|)^\alpha < \infty.
            \end{align*}
        \end{itemize}
        \item We handle the term $\rho_t^f \bar f_t$.
        \begin{itemize}
            \item  To show that $\int_{\X}\ud \vx\ \| [\rho_t^f \bar f_t](\vx)\| <\infty$
                \begin{align*}
                    \int_{\X}\ud \vx\ \rho_t^f(\vx)\|\bar f_t(\vx)\| = \int_{\X}\ud \vx\ \bar \rho_0(\vx_0)\|\bar f_t(\vx_t)\| < \infty
                \end{align*}
            \item To show that $\int_{\X}\ud \vx\ | \udiv(\rho_t^f \bar f_t) (\vx)| <\infty$
            \begin{align*}
                \int_{\X}\udiv(\rho_t^f \bar f_t) =&\ \int_{\X} \nabla \rho_t^f \cdot \bar f_t + \rho_t^f  \udiv \bar f_t = \int_{\X} \rho_t^f\left(\nabla \log \rho_t^f \cdot \bar f_t + \udiv \bar f_t \right)\\
                =&\ \int_{\X} \ud \vx_0\ \bar \rho_0(\vx_0)\left(\nabla \log \rho_t^f(\vx_t) \cdot \bar f_t(\vx_t) + \udiv \bar f_t(\vx_t) \right) < \infty,
            \end{align*}
            using the polynomial growth assumption on the ground truth velocity field $\bar f_t$.
        \end{itemize}
    \end{itemize}
\end{proof}

We now focus on addressing the integral-of-divergence term in Lemma \ref{lemma_ModuEnergyEvo}.
The following result will be useful.
\begin{remark}
    Let $X_t^f$ be the flow map generated by the velocity field $f \in \Lip(\sR^d)$. 
    We have that $X_t^f\in \Lip(\sR^d)$ and that $\rho_t^f = X_t^f\sharp\bar \rho_0$ remains bounded for $t \in [0, T]$ if $\bar \rho_0$ is bounded on $\sR^d$. This can be established using the change-of-variable formula of the probability density function. 
\end{remark}

\begin{lemma} \label{lemma_zero_divergence_integral_modulated_energy}
    Under Assumptions \ref{ass_regularity_force_field} to \ref{ass_regularity_ground_truth}, we have
    \begin{equation}\label{eqn_divergence_integral_modulated_energy}
        \int_\X \ud \vx\ \udiv \Big\{ g * (\rho^f_t - \bar \rho_t )(\vx)( \rho^f_t(\vx) f_t(\vx)   - \bar \rho_t(\vx) \bar f_t(\vx))\Big\}  = 0.
    \end{equation}
\end{lemma}
\begin{proof}
    Denote $h = g * (\rho_t - \bar \rho_t )(\vx)( \rho_t(\vx) f_t(\vx)   - \bar \rho_t(\vx) \bar f_t(\vx))$.
    To show that $h \in \gL^1(\X)$, we can show that, after splitting into simple terms, every term from $h$ is in $\gL^1$. In the following, we show 
    \[
        g * \rho_t(\vx) \rho_t(\vx) f_t(\vx) \in \gL^1(\X).
    \] 
    Other terms can be proved similarly.
    First, we show that $g * \rho_t \in \gL^\infty(\X)$ if $\rho_t \in \gL^\infty(\X)$. For any constant $C$, we have
    \begin{align*}
        &\ g * \rho_t(\vx) = \int_\X g(\vx - \vy) \rho_t(\vy) \ud \vy \\
        =&\ \int_{\|\vx - \vy\|\leq C} g(\vx - \vy) \rho_t(\vy) \ud \vy  + \int_{\|\vx - \vy\|> C} g(\vx - \vy) \rho_t(\vy) \ud \vy\\
        \leq&\ \|\rho_t\|_{\gL^\infty(\X)} \int_{\|\vx - \vy\|\leq C} \|\vx - \vy\|^{2-d} \ud \vy + C^{2-d}\int_{\|\vx - \vy\|> C}\rho_t(\vy) \ud \vy \\
        \leq&\ \|\rho_t\|_{\gL^\infty(\X)} C^2 + C^{2-d},
    \end{align*}
    where in the last inequality, we use 
    \begin{equation*}
        \int_{\|\vx - \vy\|\leq C} \|\vx - \vy\|^{2-d} \ud \vy = \int_{\|\vy\|\leq C} \|\vy\|^{2-d} \ud \vy \leq \int_{0\leq r\leq C}r^{2-d} \ud r \int J_\theta \ud_\theta \leq \int_{0\leq r\leq C} \ud r\ r \leq C^2.
    \end{equation*}
    Here $J_\theta$ denotes the determinant of the Jacobian obtained from changing to the polar coordinate, which is bounded by $r^{d-1}$.
    We hence obtain
    \begin{equation*}
        \int_\X \ud \vx\ \|g * \rho_t(\vx) \rho_t(\vx) f_t(\vx)\| \leq C' \int_\X \ud \vx\ \rho_t(\vx) \|f_t(\vx)\| = C'\int_\X \ud \vx_0\ \bar \rho_0(\vx_0) \|f_t(\vx_t)\| < \infty,
    \end{equation*}
    where we use $f_t \in \Lip(\X)$ and the estimation in Lemma \ref{lemma_regularity_estimation}.

    Similarly, to show that $\udiv(h) \in \gL^1(\X)$, we can show that, after splitting into simple terms, every term from $\udiv(h)$ is in $\gL^1$.
    In the following, we show that 
    \[
        \nabla g * \rho_t(\vx) \rho_t(\vx) f_t(\vx) \in \gL^1(\X)\ \text{and}\ g * \rho_t(\vx) \nabla \rho_t(\vx) \cdot f_t(\vx) \in \gL^1(\X).
    \] 
    Other terms can be proved similarly.

    To show that $\nabla g * \rho_t(\vx) \rho_t(\vx) f_t(\vx) \in \gL^1(\X)$, we first show that $\nabla g * \rho_t(\vx) \in \gL^\infty(\X)$ for $\rho_t \in \gL^\infty(\X)$. We can then apply the same argument as above to establish the absolute integrability of the whole term.
    \begin{align*}
        &\ \|\nabla g * \rho_t(\vx)\| \leq \int_\X \|\nabla g(\vx - \vy)\| \rho_t(\vy) \ud \vy \\
        =&\ \int_{\|\vx - \vy\|\leq C} \|\nabla g(\vx - \vy)\| \rho_t(\vy) \ud \vy  + \int_{\|\vx - \vy\|> C} \|\nabla g(\vx - \vy)\| \rho_t(\vy) \ud \vy\\
        \leq&\ \|\rho_t\|_{\gL^\infty(\X)} \int_{\|\vx - \vy\|\leq C} \|\vx - \vy\|^{1-d} \ud \vy + C^{1-d}\int_{\|\vx - \vy\|> C}\rho_t(\vy) \ud \vy \\
        \leq&\ \|\rho_t\|_{\gL^\infty(\X)} C + C^{1-d}.
    \end{align*}

    To show that $g * \rho_t(\vx) \nabla \rho_t(\vx) \cdot f_t(\vx) \in \gL^1(\X)$, we use the fact that $g * \rho_t \in \gL^\infty(\X)$ and that 
    \begin{equation}
       \int_\X \ud \vx\ \nabla \rho_t(\vx) \cdot f_t(\vx) = \int_\X \ud \vx_0\ \bar \rho_0(\vx_0) \nabla \log \rho_t(\vx_t) \cdot f_t(\vx_t).
    \end{equation}
    Using the estimation in Lemma \ref{lemma_regularity_estimation} and that $f_t \in \Lip(\X)$, we obtain the result.
\end{proof}

\begin{proof}[Proof of Lemma \ref{lemma_regularity_estimation}]
    \begin{equation}
        \frac{\ud }{\ud t}\|\vx_t\|^2 \leq \|\vx_t\|^2 + \|\bar f_t(\vx_t)\|^2 \leq \|\vx_t\|^2 + 2L^2(1+\|\vx_t\|^2) = (1 + 2L^2)\|\vx_t\|^2 + 2L^2.
    \end{equation}
    Using Gr\"onwall's inequality, we have
    \begin{equation}
        \|\vx_t\|^2 \leq \exp(t(1+2L^2))(\|\vx_0\|^2 + 2L^2/(1 + 2L^2)) \leq \exp(t(1+2L^2))(\|\vx_0\|^2 + 1).
    \end{equation}

    We have
    \begin{align} \label{eqn_dynamics_of_log_prob}
        \frac{\ud }{\ud t}\log \rho_t^f(\vx_t) = - \udiv \bar f_t(\vx_t)
    \end{align}
    
    We have
    \begin{align}
        \frac{\ud }{\ud t}\nabla \log \rho_t^f(\vx_t) = - \nabla  \left(\udiv \bar f_t(\vx_t) \right) -   \left(\gJ_{\bar f_{t}}(\vx_t)\right)^\top \nabla \log \rho_{t}^{f}(\vx_t)
    \end{align}
    \begin{align}
        \frac{\ud }{\ud t}\|\nabla \log \rho_t^f(\vx_t)\|^2 \leq&\ \|\nabla \log \rho_t^f(\vx_t)\|^2 + 2\|\nabla  \left(\udiv \bar f_t(\vx_t) \right)\|^2 + 2\|\left(\gJ_{\bar f_{t}}(\vx_t)\right)^\top \nabla \log \rho_{t}^{f}(\vx_t)\|^2\\
        \leq &\ \|\nabla \log \rho_t^f(\vx_t)\|^2 (1+2\|\gJ_{\bar f_{t}}(\vx_t)\|^2) + 2\|\nabla  \left(\udiv \bar f_t(\vx_t) \right)\|^2 \\
        \leq &\ \|\nabla \log \rho_t^f(\vx_t)\|^2 (1+2L^2) + 2L^2
    \end{align}
    Using Gr\"onwall's inequality, we have
    \begin{equation}
        \|\nabla \log \rho_t^f(\vx_t)\|^2 \leq  \exp(t(1+2L^2))(\|\nabla \log \bar \rho_0(\vx_0)\|^2 + 1).
    \end{equation}
\end{proof}

\end{document}